\newcommand\suchthat{%
 \@ifstar
  {\mathrel{}\middle|\mathrel{}}
  {\mid}%
}
\newtheorem{thm}{Theorem}[section]
  \newtheorem{prop}[thm]{Proposition}
  \newtheorem{cor}[thm]{Corollary}
  \newtheorem{lem}[thm]{Lemma}
   \theoremstyle{remark}
\newtheorem{rem}[thm]{Remark}
\numberwithin{equation}{section}  
\newtheorem{defi}[thm]{Definition}  
\theoremstyle{definition}
 \newtheorem{ex}{Example}[section]
 \DeclareMathOperator{\Pic}{Pic}
  \DeclareMathOperator{\Div}{Div}
  \DeclareMathOperator{\Picw}{\widetilde{Pic}}
    \DeclareMathOperator{\Divw}{\widetilde{Div}}
  \DeclareMathOperator{\To}{T^0}
  \DeclareMathOperator{\Tow}{\widetilde{T}^0}
  \DeclareMathOperator{\Tr}{Tr} 
  \DeclareMathOperator{\co}{covol}  
    \DeclareMathOperator{\vo}{vol} 
\title[ A generalization of reduced Arakelov divisors of a number field]{ A GENERALIZATION OF REDUCED ARAKELOV DIVISORS OF A NUMBER FIELD}
\author[Ha Thanh Nguyen Tran]{Ha Thanh Nguyen Tran} 
\address{Department of Mathematics and Systems Analysis,
Aalto University School of Science\\
Otakaari 1, 02150 Espoo, Finland.}
\email{hatran1104@gmail.com}
\keywords{Arakelov divisor, reduced, $C$-reduced, strongly $C$-reduced, infrastructure, Arakelov class group}
\begin{document}

\begin{abstract}
 Let $C \geq 1$. Inspired by the LLL-algorithm, we define strongly $C$-reduced divisors of a number field $F$ which are generalized from the concept of reduced Arakelov divisors. Moreover, we prove that strongly $C$-reduced Arakelov divisors still retain outstanding  properties of  the reduced ones: they form a finite, regularly distributed set in the Arakelov class group and the oriented Arakelov class group of $F$.

\end{abstract}

\maketitle


\section{Introduction}
Let $F$ be a number field of degree $n$. The Arakelov class group $\Pic^0_F$ of $F$ is an analogue of the Picard group  of an algebraic curve. Computing this group is of interest since one can read off the class number and the unit group of $F$ from it (see  \cite{ref:9,ref:5,ref:8,ref:36}). A good tool to compute $\Pic^0_F$ is the \textit{reduced Arakelov divisors} the theory of which is also called \textit{infrastructure} of $F$ (see \cite{ref:10,ref:7,ref:18}). The set of these reduced Arakelov divisors has a group-like structure that enables to obtain a reduced divisor by performing reduction on the sum of two reduced divisors. This phenomenon was first discovered through computing regulators of real quadratic fields (see \cite{ref:7}).

Reduced Arakelov divisors form a finite and regularly distributed set in $\Pic^0_F$ (see Section 7 in \cite{ref:4} and Chapter 1 in \cite{ref:36}). Therefore, for any class of divisors $D$ of $\Pic^0_F$, we can always find a reduced divisor $D'$ that is close to $D$ and use it to compute at $D$. There is an effective method to find such a reduced divisor $D'$ {\cite[Algorithm 10.3]{ref:4}}. However, this algorithm requires finding a shortest vector of the lattice associated to $D$ which is a very time-consuming process. On the other hand, computing a reasonable short vector, such as using the LLL-algorithm, is much faster and easier than finding a shortest vector. This leads to modifications and generalizations of the concept of reduced divisors to $C$-reduced divisors.

The first the generalization is proposed by Schoof (see {\cite[Chapter 2]{ref:36}} and \cite{ref:33}) which is called \textit{$C$-reduced Arakelov divisor}. However, for general number fields, it is unknown how to efficiently test whether a given divisor is $C$-reduced. This paper focuses on the second generalization -- \textit{strongly $C$-reduced Arakelov divisors}. It is inspired by the properties of LLL-reduced bases of the lattices associated to Arakelov divisors. 

With this definition, testing whether a given divisor is strongly $C$-reduced can be done in time polynomial in  $\log(|\Delta_F|)$,  where $\Delta_F$ is the discriminant of $F$ (see Section 3.3 in \cite{ref:36} for more details). Especially, the LLL-algorithm yields strongly $C$-reduced Arakelov divisors with $C= \sqrt{n} \hspace*{0.1cm} 2^{(n-1)/2}$ (see Example \ref{excred}). 

Strongly $C$-reduced divisors are $C$-reduced (see \cite{ref:33}). In addition, a reduced divisor in the usual sense is strongly $C$-reduced with $C = \sqrt{n}$ and a strongly $C$-reduced divisor is reduced in the usual sense  with $C = 1$. Furthermore, strongly $C$-reduced divisors still admit the same remarkable properties as reduced divisors in the sense that they form a finite and regularly distributed set in $\Pic^0_F$. These are the most important results of this paper presented in Theorem \ref{theo:dis11}, \ref{theo:dis12} and \ref{theo:dis2}.

In Section 2, we briefly recall some definitions and basic properties of Arakelov divisors. Section 3 introduces strongly $C$-reduced divisors and their properties are provided in Section 4.

\section{Preliminaries}
This section is devoted to introducing Arakelov divisors and reduced Arakelov divisors of a number field $F$. The Arakelov class group and the oriented Arakelov class group as well the metrics on these groups are briefly recalled. All details can be found in \cite{ref:4} and \cite{ref:36}.

Let $F$ be a number field of degree $n$ and $r_1, r_2$ the number of real and complex infinite primes (or infinite places) of $F$. 
\subsection{Arakelov divisors and reduced Arakelov divisors} \label{sec21}\qquad\\
Let $F_{\mathbb{R}}: = F \otimes_\mathbb{Q}\mathbb{R} \simeq \prod_{\sigma \text{ real}}\mathbb{R} \times \prod_{\sigma \text{ complex}}\mathbb{C}$ with $\sigma$ running over the infinite primes of $F$. Then $F_{\mathbb{R}}$ is an \'{e}tale $\mathbb{R}$-algebra with a canonical Euclidean structure given by the scalar product
$$\langle u, v \rangle := \Tr(u \overline{v}) \text{ for any } u = (u_{\sigma}), v = (v_{\sigma}) \in F_{\mathbb{R}}.$$

Here and in the rest of the paper we often call fractional ideals
simply `ideals'. To emphasize that an ideal is integral, we call it an integral ideal.
  \begin{defi}\label{defA}
 An \textit{Arakelov divisor} is a formal finite sum
  $D=\sum\limits_{\mathfrak{p}}n_{\mathfrak{p}}\mathfrak{p} +\sum_{\sigma}x_{\sigma}\sigma$
    where $\mathfrak{p}$ runs over the nonzero prime ideals of $ O_F$ and $\sigma$ runs over the infinite primes of $F$, here the coefficients $n_{\mathfrak{p}}$ are in $ \mathbb{Z}$ but  the $x_{\sigma}$ can be any number in $ \mathbb{R}$.
 \end{defi}
 The set of all Arakelov divisors of $F$ is an additive group denoted by $\Div_F$.
   The degree of an infinite prime $\sigma$ is equal to $1$ or $2$ depending on whether $\sigma$ is real or complex. The \textit{degree} of $D$ is $deg(D): =\sum_{\mathfrak{p}}n_{\mathfrak{p}}\log{N(\mathfrak{p})} +\sum_{\sigma } deg(\sigma)x_{\sigma} $.

 Let $I $ be a fractional ideal of $F$. 
 Then each element $g$ of $I$ is mapped to the vector $(\sigma(g))_{\sigma}$ in  $F_{\mathbb{R}}$ by the infinite primes $\sigma$. For any vector $u =(u_{\sigma})_{\sigma}$ in  $F_{\mathbb{R}}$ and any $g \in I$, the vector $u g = (u_{\sigma} \sigma(g)) $ is in $F_{\mathbb{R}}$, so then 
  $$\|u g\|^2 = \sum_{\sigma \text{ real } }  u_{\sigma}^2 |\sigma(g)|^2 + 2\sum_{\sigma \text{ complex }} |u_{\sigma}|^2|\sigma(g)|^2.$$

To every Arakelov divisor $D$ as in Definition \ref{defA},  there corresponds a \textit{Hermitian line bundle} $ (I, u)$ where $I = \prod_{\mathfrak{p}}\mathfrak{p}^{-n_{\mathfrak{p}}}$ a fractional ideal of $F$ and 
$u = (e^{-x_{\sigma}})_{\sigma} $ a vector in $  \prod_{\sigma}\mathbb{R}_{+}^*$. This correspondence is bijective and we often identify
the two notions.

Additionally, we associate to $D$ the \textit{lattice} $L= u I = \{ u f: f \in I \}\subset F_{\mathbb{R}} $ with the inherited  metric from $F_{\mathbb{R}}$ (see more about ideal lattices in \cite{ref:0}). The covolume of this lattice $L= u I$ is $\co(L) = \sqrt{|\Delta|} e^{-deg(D)}$.
We define 
$\| f\|_D^2:= \| uf\|^2 \text{ for all } f \in I.$

For each element $f \in F^*$, the \textit{principal} Arakelov divisor $(f)$ is the Arakelov divisor of Hermitian line bundle $ (f^{-1}O_F, |f|)$ where 
$ f^{-1} O_F$ is the principal ideal generated by $f^{-1}$ and 
$|f| = (|\sigma(f)|)_{\sigma}$. The product formula implies that $deg((f)) =0$ for all $f \in F^*$.
\begin{defi}
	Let $I$ be a fractional ideal of $F$. An element $f$ in $I$ is called \textit{minimal} if for all $g \in I$ if $|\sigma(g)| < |\sigma(f)|$ for all $\sigma$ then $g=0$.
\end{defi}

\begin{defi}
	Let $I$ be a fractional ideal of $F$. We define $d(I)$ to be the divisor $(I,u)$ with $u = (u_{\sigma})_{\sigma}$ and $u_{\sigma} = N(I)^{-1/n}$ for all $\sigma$.
\end{defi}

\begin{defi}
 A fractional ideal $I$ is called \textit{reduced} if $1 $ is minimal in $I$.\\
  An Arakelov divisor $D$ is called \textit{reduced} if $D$ has the form $D = d(I)$  for some reduced fractional ideal $I$.
  \end{defi}

\subsection{The Arakelov class group}\qquad\\
The set of all Arakelov divisors of degree 0 form a group, denoted by $\Div^0_F$. It contains the subgroup  of principal divisors. Similar to an algebraic  curve,  we have the following definition.
\begin{defi}
The \textit{Arakelov class group} $\Pic^0_F$ of $F$ is the quotient of $\Div^0_F$ by its subgroup  of principal divisors.
\end{defi}

Denote by 
$(\oplus_{\sigma}\mathbb{R})^0 = \{ (x_{\sigma}) \in \oplus_{\sigma}\mathbb{R}: \sum_{\sigma }deg(\sigma)x_{\sigma} =0 \}$ and $\Lambda = \{(log|\sigma(\varepsilon)|)_{\sigma}: \varepsilon \in O_F^*\}$. Then $\Lambda$ is contained in  the vector space $(\oplus_{\sigma}\mathbb{R})^0$. Let
$$\To = (\oplus_{\sigma}\mathbb{R})^0 / \Lambda.$$
Then by Dirichlet's unit theorem, $\To$ is a compact real torus of dimension $r_1+r_2-1$ {\cite[Section 4.9]{ref:7}}. 
Each $(x_{\sigma})  \in \To$ is mapped to the class of divisors $(O_F, (e^{-x_{\sigma}}))$ in $\Pic^0_F$. In this way, $\To$ becomes a subgroup of $\Pic^0_F$. 
Denoting by $Cl_F$ the class group of $F$, the structure of $\Pic^0_F$ can be seen by the proposition below.
\begin{prop}\label{prop:structure1}
Mapping a divisor class $(I,u) $ to the class of ideal $I$ induces the following exact sequence.
\begin{align*}
 0 \longrightarrow \To \longrightarrow \Pic^0_F \longrightarrow Cl_F \longrightarrow 0.
\end{align*}
\end{prop}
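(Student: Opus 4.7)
The plan is to verify exactness at each of the three terms. First I would check that the map $\varphi\colon \Pic^0_F \to Cl_F$, $(I,u) \mapsto [I]$, is a well-defined group homomorphism: any principal Arakelov divisor $(f) = (f^{-1}O_F, |f|)$ has principal underlying ideal, so $\varphi$ descends to the quotient by principal divisors, and addition of Arakelov divisors corresponds to multiplication of the underlying ideals.

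For surjectivity onto $Cl_F$, given any ideal class $[I]$ I would take the lift $d(I) = (I, (N(I)^{-1/n})_\sigma)$ defined in Section \ref{sec21}. A direct computation using $\sum_\p n_\p \log N(\p) = -\log N(I)$ and $\sum_\sigma \deg(\sigma) = n$ gives $\deg d(I) = 0$, so $d(I) \in \Pic^0_F$ and $\varphi(d(I)) = [I]$.

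The substance of the proof is the identification of $\ker \varphi$ with the image of $\To$. Suppose $(I,u)$ lies in the kernel, so $I = aO_F$ for some $a \in F^*$. Subtracting the principal divisor $(a^{-1}) = (aO_F, |a|^{-1})$ yields the equivalent representative $(O_F, u \cdot |a|)$ of the same class, still of degree $0$. Writing the archimedean part as $(e^{-x_\sigma})_\sigma$, the degree-zero condition collapses to $\sum_\sigma \deg(\sigma) x_\sigma = 0$, i.e.\ $x \in (\oplus_\sigma \mathbb{R})^0$. Conversely, every such $x$ gives a class $[(O_F, (e^{-x_\sigma}))] \in \ker \varphi$. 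Finally, two such classes $(O_F, e^{-x})$ and $(O_F, e^{-x'})$ coincide in $\Pic^0_F$ iff they differ by a principal divisor $(f) = (f^{-1}O_F, |f|)$ whose underlying ideal is $O_F$; this forces $f \in O_F^*$, and the resulting relation $x - x' = (\log|\sigma(f)|)_\sigma$ is exactly an element of $\Lambda$. Thus $\ker \varphi = (\oplus_\sigma \mathbb{R})^0 / \Lambda = \To$, and simultaneously the map $\To \to \Pic^0_F$ described before the proposition is well defined and injective.

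The main obstacle is precisely this kernel computation: one must normalize the ideal part to $O_F$ while carefully tracking the induced change in the archimedean coordinate, and then match the residual ambiguity with the unit lattice $\Lambda$ on the nose. The other two verifications are bookkeeping with the definitions and the product formula giving $\deg(f) = 0$.
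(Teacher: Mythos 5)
Your proof is correct and complete: the three exactness checks are all carried out properly, and the key step (normalizing a kernel element to the form $(O_F,(e^{-x_\sigma}))$ with $\sum_\sigma \deg(\sigma)x_\sigma=0$ and matching the residual ambiguity with the unit lattice $\Lambda$) is exactly the substance of the statement. The paper itself gives no argument and simply cites {\cite[Proposition 2.2]{ref:4}}, where essentially this same direct verification appears, so your route coincides with the referenced proof.
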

\begin{proof}
See {\cite[Proposition 2.2]{ref:4}}.
\end{proof}

For $u \in \prod_{\sigma}{\mathbb{R}_{>0}}$, we let $\log{u}$ denote the element 
$\log u: = (\log(u_{\sigma}))_{\sigma} \in \prod_{\sigma}\mathbb{R} \subset F_\mathbb{R }.$
By using the scalar product from $F_\mathbb{R }$, this vector has length
$\|\log u\|^2 = \sum_{\sigma } deg(\sigma)|\log(u_{\sigma})|^2 .$
So, we define
\begin{equation*}
  \|u\|_{Pic} =
  \min_{\substack{u' \in \prod_{\sigma}{\mathbb{R}_{>0}}\\
                  \log u' \equiv  \log u (\text{ mod } \Lambda)}}
        \|\log u'\| = \min_{\varepsilon \in O_F^*}    \|\log(|\varepsilon| u)\|. 
\end{equation*}

Now let $[D]$ and $[D']$ be two classes containing divisor $D$ and $D'$ respectively and lying on the same connected component of $\Pic^0_F$. Then by
By Proposition \ref{prop:structure1}, there is some unique  $ u \in \To$ such that $D - D' = (O_F, u)$. We define the \textit{distance} between 2 divisor classes containing $D$ and $D'$ as $\|u\|_{Pic}$.
 
The function $\|  \hspace{0.3cm}\|_{Pic}$ gives rise to a distance function that induces the natural topology of $\Pic^0_F$ 
{\cite[Section 6]{ref:4}}.

\subsection{The oriented Arakelov class group}

\begin{defi}
An \textit{oriented Arakelov divisor } is a pair $(I,u)$ where $I$ is a fractional ideal and $u$ is an arbitrary unit in $F^*_{\mathbb{R}}$. 
\end{defi}

The degree of an oriented Arakelov divisor $D=(I,u)$ is defined by the degree of the Arakelov divisor $(I, |u|)$.
A \textit{principal} oriented Arakelov divisor has the from $(f)= (f^{-1}O_F, f)$ for some $f \in F^*$ where 
the second part in its Hermitian line bundle is $f := (\sigma(f))_{\sigma} \in F^*_{\mathbb{R}}$. 
The set of oriented Arakelov divisors of degree $0$ form a group denoted by $\Divw^0_F$. It contains the subgroup of principal oriented divisors.

\begin{defi}
The quotient of the group $\Divw^0_F$ by the subgroup of principal oriented divisors is called the \textit{oriented Arakelov class group}. It is denoted by $\Picw^0_F$.
\end{defi}

 Let $F^*_{\mathbb{R},conn}$ denote the connected component of $ 1 \in F^*_{\mathbb{R}}$. Then it is isomorphic to
     $\prod_{\sigma \text{ real } }\mathbb{R}^*_+  \times  \prod_{\sigma \text{ complex } }\mathbb{C}^*.$
 We denote by $ F^*_+ = \{ f \in F^*: \sigma(f)>0 \text{ for all real } \sigma \}$ and $ O^*_{F, +} = \{ \varepsilon \in O_F^*: \sigma(\varepsilon) > 0 \text{ for all real } \sigma \}$, subgroups of $F^*_{\mathbb{R},conn}$ and put 
$$(F^*_{\mathbb{R},conn})^0 = \{ u \in F^*_{\mathbb{R},conn}: N(u)=1 \} \qquad \text{   and   } \qquad \Tow = (F^*_{\mathbb{R},conn})^0/O^*_{F, +}. $$
 The \textit{ narrow ideal class group } is a finite group defined as $Cl_{F, +} = Id_F/\{f O_F: f \in F^*_+\}  .$

The following proposition says that the groups 
 $\Tow$ is the connected components of identity of 
  $\Picw^0_F$. It provides an analogue to Proposition \ref{prop:structure1}.

\begin{prop}\label{prop:structure2}
The natural sequence below is exact.
\begin{align*}
 0 \longrightarrow \Tow \longrightarrow \Picw^0_F \longrightarrow Cl_{F, +} \longrightarrow 0  
\end{align*}

 \end{prop}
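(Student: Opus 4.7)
I would model the proof on that of Proposition \ref{prop:structure1}, but the principal technical point is the construction of the surjection $\pi : \Picw^0_F \to Cl_{F,+}$. The naive assignment $[(I,u)] \mapsto [I]$ is not well-defined, because principal oriented divisors $(f) = (f^{-1}O_F, f)$ range over all $f \in F^*$ rather than just $F^*_+$, so $I$ is only determined up to multiplication by an arbitrary principal ideal $fO_F$. My fix is to use the orientation $u$ to correct the signs: given $(I,u)$, weak approximation produces some $f \in F^*$ whose sign at each real place matches that of $u$, and I would set $\pi([(I,u)]) := [fI] \in Cl_{F,+}$. Any two such $f$ differ by an element of $F^*_+$ and hence give the same narrow class, and replacing $(I,u)$ by $(I,u) + (g) = (g^{-1}I, ug)$ shifts the required sign by $\mathrm{sgn}(g)$, which is absorbed by replacing $f$ with $fg$; since $(fg)(g^{-1}I) = fI$ as ideals, $\pi$ descends to $\Picw^0_F$ and is a homomorphism.

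Surjectivity is immediate: for any narrow ideal class $[J]$, the oriented divisor $(J, u_0)$ with $u_0 = (N(J)^{-1/n}, \ldots, N(J)^{-1/n})$ has degree zero (using $\sum_\sigma \deg(\sigma) = n$) and all-positive real components, so $\pi$ sends its class to $[J]$. For the kernel, suppose $\pi([(I,u)]) = 1$; then $fI = hO_F$ with $h \in F^*_+$, so $I = kO_F$ for $k = f^{-1}h$ with $\mathrm{sgn}(k) = \mathrm{sgn}(u)$. Subtracting the principal oriented divisor $(k^{-1})$ yields the equivalent divisor $(O_F, uk)$, whose orientation lies in $F^*_{\mathbb{R},conn}$; the degree-zero condition together with the positivity of the norm on this connected component forces $N(uk) = 1$, so $uk \in (F^*_{\mathbb{R},conn})^0$. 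A direct computation shows that for $w_1, w_2 \in (F^*_{\mathbb{R},conn})^0$ one has $(O_F, w_1) \sim (O_F, w_2)$ in $\Picw^0_F$ if and only if $w_1/w_2 \in O^*_{F,+}$: the witnessing principal divisor $(\varepsilon)$ must satisfy $\varepsilon \in O_F^*$ (from the ideal part) and must preserve the connected component (so $\varepsilon \in O^*_{F,+}$). This identifies $\ker \pi$ with $\Tow$, completing the exact sequence.

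The hard part is the sign-bookkeeping needed to define $\pi$ correctly: the orientation in $\Picw^0_F$ interacts precisely with the distinction between $Cl_F$ and $Cl_{F,+}$, and threading this through the well-definedness check is the heart of the argument. Once $\pi$ is in hand, exactness assembles in direct parallel with the unoriented statement of Proposition \ref{prop:structure1}.
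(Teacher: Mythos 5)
Your proof is correct. Note that the paper itself does not prove this proposition --- its ``proof'' is a citation to Proposition 5.3 of Schoof's \emph{Computing Arakelov class groups} --- so there is no in-paper argument to compare against; what you have written is a correct, self-contained version of the argument given there. Your central point is the right one: since principal oriented divisors $(f)=(f^{-1}O_F,f)$ run over all $f\in F^*$ while $Cl_{F,+}$ is the quotient only by totally positive principal ideals, the naive map $[(I,u)]\mapsto[I]$ is genuinely ill-defined (e.g.\ in $\mathbb{Q}(\sqrt{3})$, where the fundamental unit is totally positive, adding $(\sqrt{3})$ changes the narrow class of the ideal part). Your fix --- twisting $I$ by an $f\in F^*$ whose signs at the real places match those of $u$ --- is exactly the device in the cited source, where $Cl_{F,+}$ is presented as pairs (ideal, sign vector at the real places) modulo $(fO_F,\mathrm{sgn}(f))$; your formulation via weak approximation is an equivalent repackaging. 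The verification of well-definedness, surjectivity via $(J,u_0)$ with $u_0=N(J)^{-1/n}$, and the identification of the kernel with $\Tow=(F^*_{\mathbb{R},conn})^0/O^*_{F,+}$ (using that the degree-zero condition plus positivity of the norm on the connected component forces $N(uk)=1$, and that a principal witness $(\varepsilon)$ for $(O_F,w_1)\sim(O_F,w_2)$ must lie in $O^*_{F,+}$) are all sound. The only cosmetic remark: in the kernel computation one should say $w_1/w_2$ equals the \emph{image} of an element of $O^*_{F,+}$ in $F^*_{\mathbb{R}}$, which is clearly what you intend.
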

\begin{proof}
See {\cite[Proposition 5.3]{ref:4}}.
\end{proof}

By Dirichlet's unit theorem,  $\Tow$ is a compact torus of dimension $n - 1$ {\cite[Section 4.9]{ref:7}}.
For $u \in F_{\mathbb{R}}^*$, let $y$ denote the element 
$y= \log u: = (\log(u_{\sigma}))_{\sigma} \in \prod_{\sigma}F_{\sigma} = F_\mathbb{R }$. Here we use the principal branch of the complex logarithm. Then we define
$$\|u\|^2_{\widetilde{Pic}}: = \min_{\varepsilon \in  O^*_{F,+}} \|log(\varepsilon u)\|^2 = 
\min_{\varepsilon \in  O^*_{F,+}} \sum_{\sigma }deg(\sigma)|\log(u_{\sigma} \sigma(\varepsilon))|^2 .$$

Let two divisor classes $D$ and $D'$ that lie on the same connected component of $\Picw^0_F$.
By Proposition \ref{prop:structure2}, there is some unique  $ u  \in \Tow$ such that $D - D' = (O_F, u)$. We define the \textit{distance} between two classes of $D$ and $D'$ as $\|u\|_{\widetilde{Pic}}$. The function $\|  \hspace{0.3cm}\|_{\widetilde{Pic}}$ gives rise to a distance function that induces the natural topology of $\Picw^0_F$. So, $\Picw^0_F0$ is in this way equipped with a translation invariant Riemannian structure {\cite[Section 6]{ref:4}}.

\section{Strongly $C$-reduced Arakelov divisors}
The purpose of this section is to introduce strongly $C$-reduced Arakelov divisors of a number field $F$ and to demonstrate some examples. See Chapter 3 in \cite{ref:36} for more details.
\begin{defi}
Let $I$ be a fractional ideal. Then 1 is called  
\textit{primitive } in $I$ if $1 \in I$ and it is not divisible by any integer $d \geq 2$.
\end{defi}

\begin{defi}
	Let $C \geq 1$.  A fractional ideal $I$ is called strongly $C$-\textit{reduced} if the following hold:
\begin{itemize}
\item $1 \in I$  is primitive and
\item  $\text{ For all } g \in I\backslash \{0\}$, we have $ \|1\| \leq C\|g\| .$
\end{itemize}

 The second condition  can be restated as follows: the shortest vector of the lattice $I$ has length at least $ \frac{\sqrt{n}}{C}$. In other words, let $S$ be the sphere centered at the origin and radius $\frac{\sqrt{n}}{C}$. Then the interior of $S$ does not contain any nonzero points of the lattice $I$. Observe that if $I$ is strongly $C$-reduced then $I$ is  strongly  $C'$-reduced for any $C' \geq C$.
\end{defi}

\begin{ex}
Let $F = \mathbb{Q}(\sqrt{7})$. Then $n=2$ and the ring of integers of $F$ is $O_F = \mathbb{Z}[\sqrt{7}]$.
Let $I = O_F + \alpha O_F$ with $\alpha =\frac{1+\sqrt{7}}{4}$. Then $1$ is primitive in $I$. The vector $\alpha$ is a shortest vector of the lattice $I$ with length $\| \alpha\| = 1$. If $C=1$, then the interior of the circle centered at the origin and radius $\sqrt{n}/C = \sqrt{2}$  contains $\alpha$, hence $I$ is not strongly $C$-reduced. In case $C=2$, the interior of the circle centered at the origin and radius $\sqrt{n}/C = \frac{\sqrt{2}}{2}$  does not contain any nonzero points of the lattice $I$, so $I$ is strongly $C$-reduced. (See Figure \ref{pic:1} and Figure  \ref{pic:2}).

\end{ex}
 
 \begin{figure}[h]
 \centering
        \begin{minipage}{0.49\textwidth}
          \includegraphics[width=\linewidth]{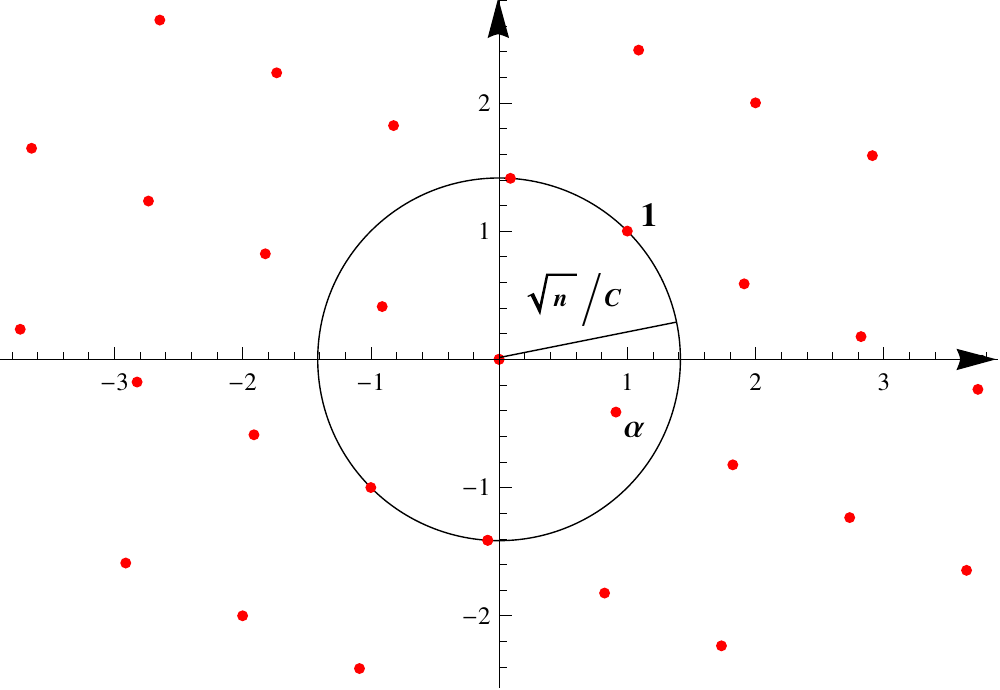}
          \caption{$C=1$: $I$ is not strongly $C$-reduced.}
          \label{pic:1}
        \end{minipage}
        \begin {minipage}{0.49\textwidth}
          \includegraphics[width=\linewidth]{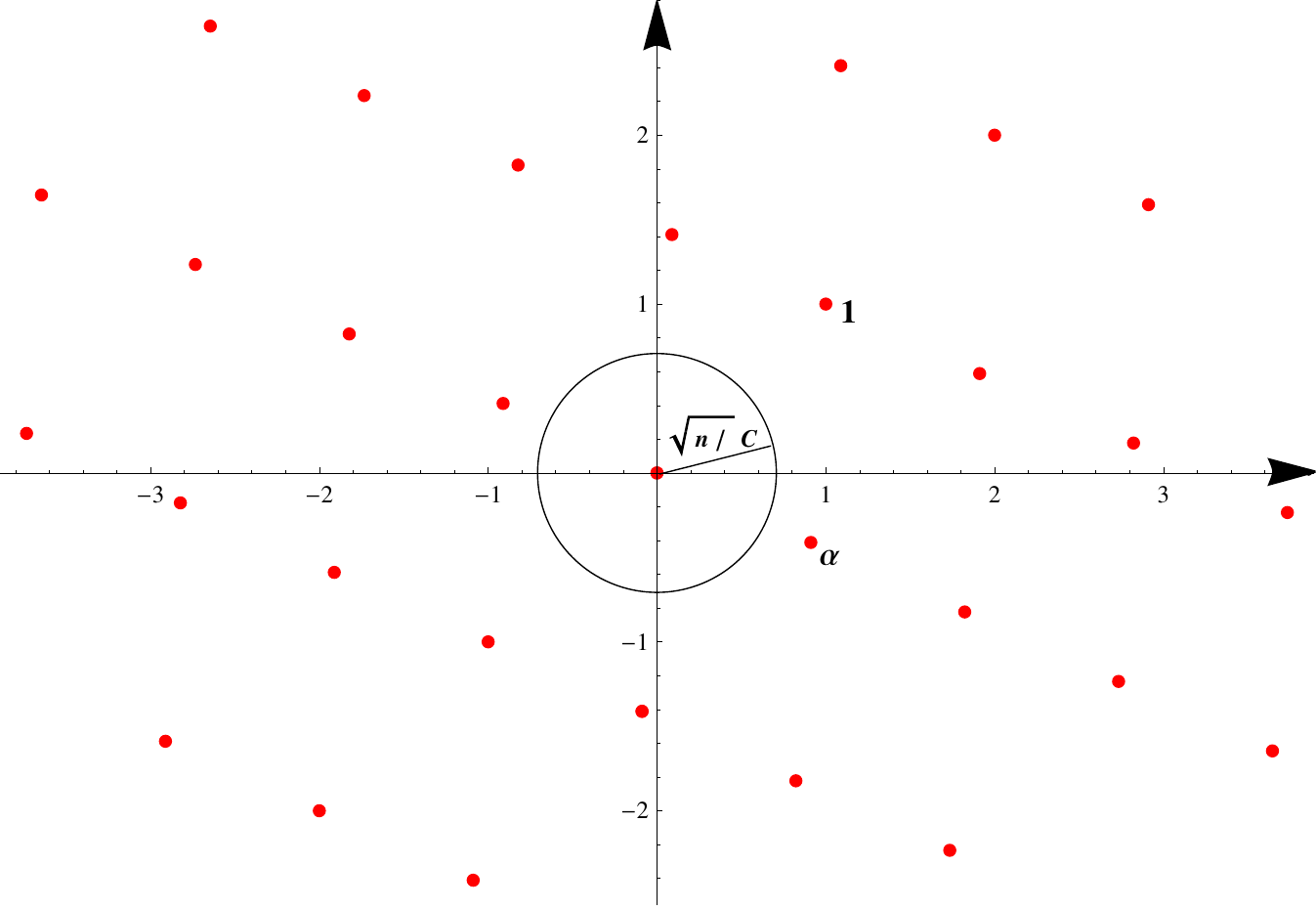}
         \caption{$C=2$: $I$ is strongly  $C$-reduced.}
          \label{pic:2}
        \end{minipage}
     \end{figure}

\begin{defi}
An Arakelov divisor $D$ is called \textit{strongly} $C$-\textit{reduced} if $D$ has the form $D = d(I)$  for some strongly $C$-reduced fractional ideal $I$.
\end{defi}

\begin{ex}\label{excred} Let $F$ be a number field of degree $n$ and let $I$ be a fractional ideal of $F$.
\qquad
\begin{itemize}
\item If 1 is the shortest vector of the lattice $I$, then $d(I)$ is strongly $C$-reduced with $C =1$. In particular, $D = (O_F, 1)$ is strongly 1-reduced.
\item If $I$ is reduced in the usual sense, in other words if 1 is minimal in $I$ (see Section \ref{sec21} or see Section 7 in \cite{ref:4}), then $d(I)$ is strongly $C$-reduced with $C=\sqrt{n}$.
\item Let $F=\mathbb{Q}(\sqrt{\Delta})$ be a real quadratic field  with $\Delta = 73$. There are totally 11 strongly $C$-reduced Arakelov divisors on the principal circle $\To$ of $\Pic^0_F$ with $C= \sqrt{2}$. They are symmetric through the vertical line passing $D_0$ (see Figure \ref{fig:Creqd73}). Moreover, nine divisors are reduced in the usual sense and two divisors $D_3, D_8$ are strongly $C$-reduced with $C=\sqrt{2}$ but not reduced in the usual sense (see \cite{ref:4}).			
\item Let $b_1, ..., b_n$ be an LLL-reduced basis of $I$ and let $J = b_1^{-1}I$. Assume that  1 is primitive in $J$. Then $d(J)$ is strongly $C$-reduced with $C=2^{(n-1)/2} \sqrt{n}$ (see \cite{ref:1} and {\cite[Chapter 3]{ref:36}}).
\begin{figure}
		\begin{center}
		\includegraphics[width=6cm]{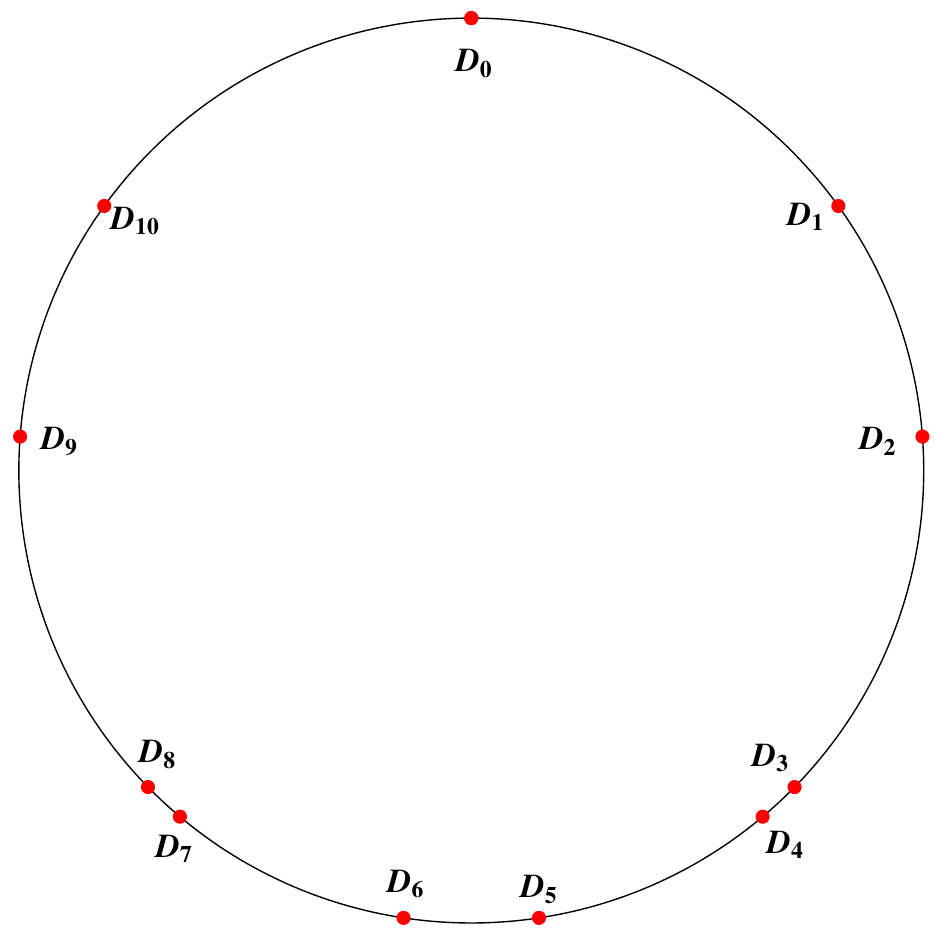}
		\caption{Strongly $C$-reduced Arakelov divisors of $\mathbb{Q}(\sqrt{73})$  with $C =\sqrt{2}$.	\label{fig:Creqd73}}
		\end{center}
\end{figure}

 \end{itemize}
\end{ex}

\section{Properties of strongly $C$-reduced Arakelov divisors}\label{sec:cred}
In this section,  let $F$ be a number field of degree $n$ admitting $r_1$ real infinite primes and $r_2$ complex infinite primes. Denote by $\partial_F = \left(\frac{2}{\pi}\right)^{r_2}\sqrt{|\Delta|}$.
We first claim that the set $Sred_F^{C}$ of all strongly $C$-reduced divisors are regularly distributed in the topological groups $\Pic^0_F$ as well as in $\Picw^0_F$. Then we bound for its cardinality.

We first recall the lemma below.

\begin{lem}\label{lem:min}
	Let 
	$D= (I,u)$ be a divisor of degree $0$. Then there is a nonzero element $f \in I$ such that 
	$ u_{\sigma} |\sigma(f)| \leq \partial_F^{1/n} \text{ for all } \sigma$. 
	In particular, we obtain that $\|f\|_{D} \leq \sqrt{n} \partial_F^{1/n}$.
\end{lem}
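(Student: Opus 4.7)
The plan is to apply Minkowski's convex body theorem to the lattice $L = uI$ inside $F_\mathbb{R}$, taking as the symmetric convex body the box
$$B = \{v \in F_\mathbb{R} : |v_\sigma| \leq \partial_F^{1/n} \text{ for every infinite prime } \sigma\}.$$
A nonzero lattice point $uf \in B$ is exactly what the first assertion asks for, since the components $u_\sigma > 0$ force $f \neq 0$.

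Next, I would compare the Lebesgue measure of $B$ with the Lebesgue covolume of $L$, using the identification $F_\mathbb{R} \cong \mathbb{R}^{r_1} \times \mathbb{C}^{r_2}$ with $\mathbb{C}$ viewed as $\mathbb{R}^2$. The body $B$ is a product of $r_1$ intervals of length $2\partial_F^{1/n}$ and $r_2$ discs of radius $\partial_F^{1/n}$, so its volume is $2^{r_1}\pi^{r_2}\partial_F$; substituting $\partial_F = (2/\pi)^{r_2}\sqrt{|\Delta|}$ collapses this to $2^{r_1+r_2}\sqrt{|\Delta|}$. On the other side, the Lebesgue covolume of $uI$ is $N(I)\prod_\sigma u_\sigma^{\deg\sigma}$ times the standard covolume $2^{-r_2}\sqrt{|\Delta|}$ of $O_F$, and the hypothesis $\deg(D)=0$ forces $N(I)\prod_\sigma u_\sigma^{\deg\sigma}=1$. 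Thus the volume of $B$ equals exactly $2^n$ times the covolume of $L$, and Minkowski's theorem in its closed form (approached by slightly dilated boxes if one prefers the strict version) produces a nonzero $uf \in B \cap L$.

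The second bound is then immediate: plugging $u_\sigma|\sigma(f)| \leq \partial_F^{1/n}$ into the explicit formula for $\|uf\|^2$ recalled in Section 2.1 gives
$$\|f\|_D^2 \leq \sum_{\sigma\text{ real}}\partial_F^{2/n} + 2\sum_{\sigma\text{ complex}}\partial_F^{2/n} = (r_1 + 2r_2)\,\partial_F^{2/n} = n\,\partial_F^{2/n}.$$

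The one delicate point is the bookkeeping of normalisations: the metric induced by the trace form on $F_\mathbb{R}$ makes $\co(L) = \sqrt{|\Delta|}$ for divisors of degree $0$, whereas Minkowski's theorem is naturally phrased against Lebesgue measure on $\mathbb{R}^n$, which differs from the trace-form volume by a factor of $2^{r_2}$ at complex places. Tracking this factor, together with the $(2/\pi)^{r_2}$ hidden inside $\partial_F$, is precisely what makes the Minkowski inequality come out to an equality here, which is why the closed form of the theorem is required.
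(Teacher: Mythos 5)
Your proof is correct: the volume and covolume bookkeeping (the factor $2^{r_2}$ between the trace-form and Lebesgue normalisations, the exact equality $\vo(B)=2^{r_1+r_2}\sqrt{|\Delta|}=2^n\cdot 2^{-r_2}\sqrt{|\Delta|}$, and the use of the closed form of Minkowski's theorem) all check out, as does the deduction $\|f\|_D^2\le (r_1+2r_2)\partial_F^{2/n}=n\,\partial_F^{2/n}$. The paper itself gives no argument and simply cites Proposition 4.4 of Schoof's ``Computing Arakelov class groups,'' whose proof is exactly this Minkowski box argument, so your route coincides with the intended one.
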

\begin{proof}
	See {\cite[Proposition 4.4]{ref:4}}.
\end{proof}

\begin{lem}\label{minimal}
	Let 
	$D= (I,u)$ be a divisor of degree $0$. Then there is a minimal element $f \in I$ such that 
	$ u_{\sigma} |\sigma(f)| \leq \partial_F^{1/n} \text{ for all } \sigma$.
\end{lem}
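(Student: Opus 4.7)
The plan is to strengthen Lemma \ref{lem:min} by a simple descent argument: start from any $f_0 \in I \setminus \{0\}$ with $u_\sigma |\sigma(f_0)| \leq \partial_F^{1/n}$ for all $\sigma$, and replace it with a strictly smaller element whenever one exists, until we reach a minimal one. Crucially, the bound on $u_\sigma |\sigma(f)|$ is preserved under such replacements, because if $|\sigma(g)| < |\sigma(f)|$ for every $\sigma$, then $u_\sigma |\sigma(g)| < u_\sigma |\sigma(f)| \leq \partial_F^{1/n}$.

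Concretely, I would introduce the finite set
\[
S \;=\; \bigl\{\, g \in I \setminus \{0\} : u_\sigma |\sigma(g)| \leq \partial_F^{1/n} \text{ for all } \sigma \,\bigr\}.
\]
Two observations. First, $S$ is nonempty by Lemma \ref{lem:min}. Second, $S$ is finite: multiplying by the vector $u$ identifies $S$ with a subset of the lattice $uI \subset F_{\mathbb R}$ that is contained in the bounded box $\{|x_\sigma| \leq \partial_F^{1/n}\}$, and lattice points in a bounded region form a finite set. Hence among the elements of $S$ I may choose $f$ with minimal absolute norm $|N(f)| = \prod_\sigma |\sigma(f)|^{\deg(\sigma)}$ (which takes positive values bounded below, for instance by $N(I)$ times a positive integer, so the minimum is attained).

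It remains to check that such an $f$ is minimal in $I$. Suppose $g \in I \setminus \{0\}$ satisfies $|\sigma(g)| < |\sigma(f)|$ for every infinite prime $\sigma$. By the key observation above, $u_\sigma|\sigma(g)| < u_\sigma |\sigma(f)| \leq \partial_F^{1/n}$, so $g \in S$. Taking the product of $|\sigma(g)|^{\deg(\sigma)} < |\sigma(f)|^{\deg(\sigma)}$ over $\sigma$ yields $|N(g)| < |N(f)|$, contradicting our choice of $f$. Therefore no such nonzero $g$ exists and $f$ is minimal, while still satisfying $u_\sigma |\sigma(f)| \leq \partial_F^{1/n}$ as required.

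There is no real obstacle here; the only point worth being careful about is the norm comparison when complex places are present, which is handled by including the factor $\deg(\sigma)$ in the exponent so that $|N(\cdot)| = \prod_\sigma |\sigma(\cdot)|^{\deg(\sigma)}$ strictly decreases under coordinatewise strict decrease.
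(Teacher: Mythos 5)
Your proof is correct and rests on the same two observations as the paper's: the bound $u_{\sigma}|\sigma(\cdot)| \leq \partial_F^{1/n}$ is preserved under coordinatewise strict decrease, and only finitely many lattice points lie in the resulting bounded box. The paper phrases this as an explicit iterative descent that terminates by finiteness, whereas you package the same idea as a one-step extremal choice (minimizing $|N(\cdot)|$ over the finite set $S$); the two are essentially the same argument.
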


\begin{proof}
	Since $D = (I, u)$ is an Arakelov divisors of degree $0$, by Lemma  \ref{lem:min}, there is a nonzero element $g$ in $I$ such that 
	\begin{equation}\label{eq:min1}
	u_{\sigma} |\sigma(g)| \leq \partial_F^{1/n} \text{ for all } \sigma.
	\end{equation}
	
	We claim that there is a minimal element $f$ of $I$ satisfying this condition. Indeed, if $g$ is minimal then we are done. If $g$ is not minimal then the box 
	$S_g = \{f \in I: |\sigma(f)| < |\sigma(g)| \text{ for all } \sigma \}$ 
	contains some nonzero element $f_1$ of $I$. Hence $ u_{\sigma} |\sigma(f_1)| \leq \partial_F^{1/n} \text{ for all } \sigma$. In other words, we can replace $g$ by $f_1$ in \eqref{eq:min1}. If $f_1$ is minimal then we are done. If not then there is some nonzero element $f_2 $ in $S_g$ satisfying \eqref{eq:min1} and so on. Since the box $S_g$ is bounded, it only contains finitely many elements of the lattice $I$. Therefore, after finite steps, we can find a minimal element $f$ of $I$ satisfying $ u_{\sigma} |\sigma(f)| \leq \partial_F^{1/n} \text{ for all } \sigma$. This proves the claim.
\end{proof}

The following proposition  says that the set of all strongly $C$-reduced Arakelov divisors of a number field is finite. It is similar to Proposition 7.2 in \cite{ref:4}.

\begin{prop}\label{prop:finite}
Let $I$ be a fractional ideal. If $d(I)$ is a strongly $C$-reduced Arakelov divisor, then the inverse $I^{-1}$ of $I$ is an integral ideal  and its norm is at most $C^n \partial_F$.
In particular, the set $Sred_F^{C}$ of all strongly $C$-reduced Arakelov divisors is finite.
\end{prop}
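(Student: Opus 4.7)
The plan is to extract the integrality of $I^{-1}$ directly from the primitivity of $1 \in I$, and then to apply the Minkowski-type existence lemma (Lemma \ref{lem:min}) to produce a small vector $f \in I$ whose length can then be compared with $\|1\|$ via the strongly $C$-reduced inequality. Converting that comparison into a lower bound on $N(I)$ will yield the desired upper bound on $N(I^{-1})$, and the finiteness statement is then immediate from the fact that an integral ideal of bounded norm has only finitely many choices.

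More concretely, I would first observe that since $1 \in I$, the inclusion $O_F \subseteq I$ holds, hence $I^{-1} \subseteq O_F$, so $I^{-1}$ is an integral ideal; in particular $N(I^{-1}) = N(I)^{-1} \in \mathbb{Z}_{>0}$. Next, since $d(I) = (I,u)$ with $u_\sigma = N(I)^{-1/n}$ has degree $0$, Lemma \ref{lem:min} supplies a nonzero $f \in I$ with $u_\sigma|\sigma(f)| \leq \partial_F^{1/n}$ for every $\sigma$, equivalently $|\sigma(f)| \leq (N(I)\,\partial_F)^{1/n}$ for every infinite prime $\sigma$. Squaring, multiplying by $\deg(\sigma)$ and summing gives
\begin{equation*}
  \|f\|^2 \;=\; \sum_{\sigma} \deg(\sigma)\,|\sigma(f)|^2 \;\leq\; n\,(N(I)\,\partial_F)^{2/n},
\end{equation*}
i.e.\ $\|f\| \leq \sqrt{n}\,(N(I)\,\partial_F)^{1/n}$.

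Now the strongly $C$-reduced hypothesis applied to $f \in I\setminus\{0\}$ gives $\sqrt{n} = \|1\| \leq C\|f\|$, so combining with the inequality above I get
\begin{equation*}
  \sqrt{n} \;\leq\; C\sqrt{n}\,(N(I)\,\partial_F)^{1/n},
\end{equation*}
that is, $N(I)\,\partial_F \geq C^{-n}$, which rearranges to $N(I^{-1}) = N(I)^{-1} \leq C^n\,\partial_F$, as claimed.

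For the finiteness of $Sred_F^C$, I note that each strongly $C$-reduced Arakelov divisor is of the form $d(I)$ and is determined by the underlying fractional ideal $I$, or equivalently by the integral ideal $I^{-1}$; by the bound just proved, $I^{-1}$ lies in the finite set of integral ideals of norm at most $C^n\,\partial_F$, hence $Sred_F^C$ is finite. The only mildly delicate step in the argument is making sure to distinguish the unweighted length $\|f\|$ appearing in the strongly $C$-reduced inequality from the $d(I)$-weighted length $\|f\|_{d(I)} = N(I)^{-1/n}\|f\|$ coming out of Lemma \ref{lem:min}; once that bookkeeping is done, everything else is a direct chain of estimates.
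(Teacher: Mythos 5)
Your proof is correct and follows essentially the same route as the paper: integrality of $I^{-1}$ from $1\in I$, then Lemma \ref{lem:min} combined with the strongly $C$-reduced inequality $\|1\|\leq C\|f\|$ to bound $N(I^{-1})$ by $C^n\partial_F$, and finiteness from the finiteness of integral ideals of bounded norm. The only cosmetic difference is that you unwind the weighted bound coordinate-wise before summing, whereas the paper works directly with $\|N(I)^{-1/n}f\|\leq\sqrt{n}\,\partial_F^{1/n}$; the bookkeeping between $\|f\|$ and $\|f\|_{d(I)}$ that you flag is handled correctly.
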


\begin{proof}\qquad
Since $1 \in I$, it follows that $I^{-1} \subset O_F$. 
By Lemma \ref{lem:min}, there is a nonzero element $f \in I$ such that
$$\| N(I)^{-1/n} f\| \leq \sqrt{n} \partial_F^{1/n}.$$
The divisor $d(I)$ is strongly $C$-reduced, thus  $\|f\| \geq \frac{\sqrt{n}}{C}$. Consequently, the following is derived.
$$N(I^{-1})  \leq \frac{\sqrt{n}^n \partial_F }{\|  f\|^n} \leq  \frac{\sqrt{n}^n \partial_F }{\left(\frac{\sqrt{n}}{C}\right)^n}  = C^n \partial_F .$$
Since the number integral ideals of bounded norm is finite, so is the set of all $C$-reduced Arakelov divisors $Sred_F^C$.
\end{proof}

We first recall the lemma below {\cite[Lemma 7.5]{ref:4}}.

\begin{lem}\label{sum0}
	Let $x_i \in \mathbb{R}$ for $i=1, 2, ..., n$. Suppose that $\sum_{i=1}^{n}x_i =0$ and that $x \in \mathbb{R}$ has the property that $x_i \leq x$ for all $i =1, 2, ..., n$ then $\sum_{i=1}^{n}x_i^2 \leq n(n-1)x^2$.
\end{lem}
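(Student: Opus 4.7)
The plan is to reduce the inequality to a simpler one involving nonnegative numbers by the substitution $y_i := x - x_i$. Note first that $x \geq 0$: since $\sum x_i = 0$, at least one $x_i$ is nonnegative, and $x$ dominates all of them. So $y_i \geq 0$ for every $i$, and
\begin{equation*}
\sum_{i=1}^n y_i = nx - \sum_{i=1}^n x_i = nx.
\end{equation*}

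Next I would expand $x_i^2 = (x - y_i)^2 = x^2 - 2x y_i + y_i^2$ and sum over $i$, using $\sum y_i = nx$, to obtain
\begin{equation*}
\sum_{i=1}^n x_i^2 = nx^2 - 2x(nx) + \sum_{i=1}^n y_i^2 = -nx^2 + \sum_{i=1}^n y_i^2.
\end{equation*}
Thus the claim $\sum x_i^2 \leq n(n-1)x^2$ is equivalent to $\sum y_i^2 \leq n^2 x^2$, i.e.\ $\sum y_i^2 \leq \left(\sum y_i\right)^2$.

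The last inequality is immediate because all $y_i$ are nonnegative: expanding $\left(\sum y_i\right)^2 = \sum y_i^2 + 2\sum_{i<j} y_i y_j$, the cross-term sum is nonnegative. So no step is a genuine obstacle; the only real idea is the substitution $y_i = x - x_i$, which turns a constrained problem with sign-indefinite terms into a one-line bound on nonnegative reals. Equality analysis (namely, it is tight when all but one of the $x_i$ equal $x$ and the remaining one equals $-(n-1)x$) confirms that the constant $n(n-1)$ is sharp.
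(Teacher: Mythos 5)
Your proof is correct. The substitution $y_i = x - x_i$ is valid (and you rightly note the small but necessary observation that $x \geq 0$, which guarantees $\sum y_i = nx \geq 0$ and makes the final inequality $\sum y_i^2 \leq \left(\sum y_i\right)^2$ an honest consequence of the nonnegativity of the $y_i$). The algebra reducing the claim to $\sum y_i^2 \leq n^2x^2$ checks out, and your equality analysis correctly shows the constant $n(n-1)$ is attained. The paper itself does not supply a proof: it attributes the lemma to Schoof and remarks only that it ``can be proved easily by induction on $n$.'' So your route is genuinely different from the one the paper gestures at --- a direct, non-inductive argument via a change of variables that converts the sign-indefinite constraint $\sum x_i = 0$ into a statement about nonnegative reals. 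What your approach buys is a self-contained one-pass proof with a transparent equality case; what the suggested induction would buy is, at best, brevity of statement at the cost of actually having to manage how the constraint $\sum x_i = 0$ and the bound $x$ transform when passing from $n$ to $n-1$, which is not entirely trivial. Your version is arguably cleaner.
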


Since this lemma can be proved easily by induction on $n$, we skip the proof here.

The case $C > \sqrt{n}$ is presented by the following theorem. Its result and proof are the same as Theorem 7.4 in \cite{ref:4}. The reason is that if a fractional ideal  is reduced in the usual sense, then it is strongly $C$-reduced with $C \geq  \sqrt{n}$. 

\begin{thm}\label{theo:dis11} 
Let $C\geq\sqrt{n}$. Then for any Arakelov divisor $D = (I,u)$ of degree $0$,  there is a strongly $C$-reduced divisor $D'$ lying on the same connected component of $\Pic^0_F$ as $D$ for which  
  $\|D-D'\|_{Pic}  <  \log{\partial_F}.$
\end{thm}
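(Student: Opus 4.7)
The plan is to produce $D'$ as the divisor $d(J)$ of a reduced fractional ideal $J$ lying in the ideal class of $I$. Lemma \ref{minimal} supplies a minimal element $f\in I$ with $u_\sigma|\sigma(f)|\leq \partial_F^{1/n}$ at every infinite place $\sigma$. Setting $J := f^{-1}I$, the minimality of $f$ in $I$ translates exactly to the minimality of $1$ in $J$, so $J$ is reduced in the usual sense. Hence $D' := d(J)$ is reduced in the usual sense, and by the second bullet of Example \ref{excred} it is strongly $\sqrt{n}$-reduced; in particular it is strongly $C$-reduced for every $C\geq\sqrt{n}$. Because $I$ and $J$ determine the same class in $Cl_F$, Proposition \ref{prop:structure1} shows that $D$ and $D'$ sit on the same connected component of $\Pic^0_F$.

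Next I would read the difference $D-D'$ inside $\Pic^0_F$. A direct computation with Hermitian line bundles, together with the addition of the principal divisor $(f)=(f^{-1}O_F,|f|)$, shows that $D-D'$ is represented by the pair $(O_F,w)$ with
$$
w \;=\; u\cdot |f|\cdot N(J)^{1/n}\in \prod_\sigma \mathbb{R}^*_+,
$$
so $\|D-D'\|_{Pic}\leq \|\log w\|$ by choosing $\varepsilon=1$ in the definition of $\|\cdot\|_{Pic}$. Writing $a_\sigma := \log(u_\sigma|\sigma(f)|)$, this rewrites as $\log w_\sigma = a_\sigma - M$, where $M := \tfrac{1}{n}\sum_\sigma\deg(\sigma)\,a_\sigma$. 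Combining $\deg(D)=0$ with $N(J)=|N(f)|^{-1}N(I)$ yields $M = -\tfrac{1}{n}\log N(J)$; since $1\in J$ forces $O_F\subseteq J$ and hence $N(J)\leq 1$, the quantity $M$ is nonnegative. The bound from Lemma \ref{minimal} additionally provides the one-sided estimate $a_\sigma\leq \tfrac{1}{n}\log\partial_F$ for every $\sigma$.

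To finish, apply Lemma \ref{sum0} to the $n$ numbers $\tilde b_\tau := \log w_{\sigma(\tau)}$ indexed by the $n$ embeddings $\tau$ of $F$ (each complex place contributing two equal values). These sum to $\sum_\sigma\deg(\sigma)\log w_\sigma = 0$ and satisfy $\tilde b_\tau\leq x := \tfrac{1}{n}\log\partial_F - M$, where $x$ lies in $[0,\tfrac{1}{n}\log\partial_F]$ (the inequality $x\geq 0$ is automatic, since the average of the $\tilde b_\tau$ is $0$). Lemma \ref{sum0} then gives
$$
\|\log w\|^2 \;=\; \sum_\tau \tilde b_\tau^{\,2} \;\leq\; n(n-1)\,x^2 \;\leq\; \tfrac{n-1}{n}(\log\partial_F)^2,
$$
whence $\|D-D'\|_{Pic} \leq \sqrt{(n-1)/n}\,\log\partial_F < \log\partial_F$, which is exactly the claim.

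The one subtle step is the identification $M=-\tfrac{1}{n}\log N(J)$ together with the observation that $1\in J$ forces $M\geq 0$: this is precisely what converts the one-sided upper bound on $a_\sigma$ coming from Lemma \ref{minimal} into an upper bound on the coordinates of $\log w$ with the correct sign to feed into Lemma \ref{sum0}. Everything else is a bookkeeping calculation with principal divisors and norms.
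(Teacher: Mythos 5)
Your proof is correct and is essentially the argument the paper intends: the paper simply defers to the proof of Theorem 7.4 in \cite{ref:4}, which proceeds exactly as you do (reduce $I$ by a minimal element $f$ with $u_\sigma|\sigma(f)|\leq\partial_F^{1/n}$, note that the resulting reduced ideal gives a strongly $\sqrt{n}$-reduced divisor, and bound $\|\log w\|$ via Lemma \ref{sum0}); the same computation also appears as Case 1 in the paper's proof of Theorem \ref{theo:dis12}. Your write-up is in fact more careful than the source on the sign of $M=-\tfrac{1}{n}\log N(J)$, and no gaps are present.
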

\begin{proof} See the proof of Theorem 7.4 in \cite{ref:4}.
\end{proof}

\begin{thm}\label{theo:dis12} Let $1< C \leq \sqrt{n}$. Then for any Arakelov divisor $D = (I,u)$ of degree $0$,  there is a strongly $C$-reduced divisor $D'$ lying on the same connected component of $\Pic^0_F$ as $D$ and  
	$$\|D-D'\|_{Pic}  <  \frac{\log n}{2 \log C} \log{\partial_F}.$$
	
\end{thm}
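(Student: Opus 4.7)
The plan is to iterate the single-step reduction from Theorem \ref{theo:dis11}, producing a chain of divisors $D = D_0, D_1, \ldots, D_m$ in the same ideal class, each consecutive pair at $\Pic^0_F$-distance less than $\log\partial_F$, and with $D_m$ strongly $C$-reduced. The target bound will then follow from showing that $m \leq \ceil{\log n/(2\log C)}$ iterations suffice. First I would apply Lemma \ref{minimal} to $D_0 = (I,u)$ to obtain a minimal $f_0 \in I$ with $u_\sigma |\sigma(f_0)| \leq \partial_F^{1/n}$, set $I_1 := f_0^{-1} I$ and $D_1 := d(I_1)$; the same computation as in Theorem \ref{theo:dis11} gives $\|D_0 - D_1\|_{Pic} < \log\partial_F$, and $I_1$ is reduced in the usual sense, hence already strongly $\sqrt{n}$-reduced.

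If $I_k$ is not yet strongly $C$-reduced, it contains a shortest vector $g_k$ with $\|g_k\| < \sqrt{n}/C$, which by the AM--GM inequality forces $|N(g_k)| \leq (\|g_k\|/\sqrt n)^n < C^{-n}$. Setting $I_{k+1} := g_k^{-1} I_k$ (adjusting $g_k$ if necessary so that $1$ is primitive in $I_{k+1}$) and $D_{k+1} := d(I_{k+1})$, the norm grows by a controlled amount: $N(I_{k+1}) > C^n N(I_k)$. Combined with the Minkowski-type inequality that every nonzero $h \in I$ satisfies $\|h\| \geq \sqrt{n}\,N(I)^{1/n}$ (itself a consequence of AM--GM plus $|N(h)| \geq N(I)$), this gives that the length of the shortest vector of $I_{k+1}$ grows multiplicatively by a factor close to $C$ compared to that of $I_k$. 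A careful version of the argument in Theorem \ref{theo:dis11}, applied here to $D_k$ via Lemma \ref{minimal}, then controls the per-step distance by $\|D_k - D_{k+1}\|_{Pic} < \log \partial_F$.

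Iteration terminates once the shortest vector of $I_m$ exceeds $\sqrt{n}/C$, which (since the shortest-vector lower bound grows by a factor of $C$ per step starting from the reduced baseline shortest-vector $\geq 1$) takes at most $m = \ceil{\log n / (2 \log C)}$ steps. Summing the per-step bounds then yields
\[
\|D - D_m\|_{Pic} \leq \sum_{k=0}^{m-1} \|D_k - D_{k+1}\|_{Pic} < m\log\partial_F \leq \frac{\log n}{2\log C}\log\partial_F,
\]
as required.

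The main obstacle is making the step-wise factor-$C$ improvement of the shortest-vector lower bound fully rigorous: although AM--GM and the Minkowski-type inequality naturally interact to give norm growth, turning this into a clean geometric improvement of the shortest vector requires additional analysis, especially handling the fact that the shortest vector of $g_k^{-1}I_k$ need not realize the Minkowski bound exactly. A secondary subtlety is reconciling the uniform control of $|\sigma(\cdot)|$ provided by Lemma \ref{minimal} (essential for the per-step distance bound $\log\partial_F$) with the choice of reducing element $g_k$ designed to improve the shortest-vector condition; the iteration must be arranged so that both the distance bookkeeping and the shortest-vector bookkeeping proceed together.
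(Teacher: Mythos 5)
Your high-level plan---repeatedly divide by a short vector until the ideal becomes strongly $C$-reduced---matches the paper's, but both quantitative pillars of your argument have genuine gaps. First, the termination count. You claim the shortest vector of $I_k$ grows by a factor of $C$ per step ``starting from the reduced baseline shortest-vector $\geq 1$,'' giving $m \leq \lceil \log n/(2\log C)\rceil$ iterations. What provably grows by a factor of $C$ per step is the lower bound $\sqrt{n}\,N(I_k)^{1/n}$, and its starting value is $\sqrt{n}\,N(I_1)^{1/n} \geq \sqrt{n}\,\partial_F^{-1/n}$, not $1$; the shortest vector itself need not grow at all under multiplication by $g_k^{-1}$, since that map distorts the archimedean coordinates unevenly (you flag this yourself, but it is not a technicality to be smoothed over---it is the crux). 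The correct step count, obtained in the paper by noting that $N(J_{k+1}^{-1}) = |N(f_k)|\cdots|N(f_1)|\,N(J_1^{-1}) < \partial_F/C^{nk}$ must remain $\geq 1$ because $J_{k+1}^{-1}$ is integral, is $k < \frac{\log \partial_F}{n\log C}$: it depends on $\partial_F$, not only on $n$.

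Second, the distance accounting. Even granting a per-step distance of $\log\partial_F$, the triangle-inequality sum combined with the correct step count gives roughly $\frac{(\log\partial_F)^2}{n\log C}$, quadratic in $\log\partial_F$ and far weaker than the claimed bound; with your (unjustified) step count the sum still overshoots, since $\lceil \log n/(2\log C)\rceil \cdot \log\partial_F$ exceeds $\frac{\log n}{2\log C}\log\partial_F$ whenever the ceiling is strict. The paper never sums per-step distances. It writes $D - D' + (f_k\cdots f_1 f) = (O_F, v_{k+1})$ with $v_{k+1} = u\,N(J_{k+1})^{1/n}|f_k\cdots f_1 f|$ and bounds each coordinate by $\left(\sqrt{n}/C\right)^k \partial_F^{1/n}$, using $|\sigma(f_j)| \leq \|f_j\| \leq \sqrt{n}/C$: each intermediate step contributes only a factor $\sqrt{n}/C$ per coordinate, not a fresh $\partial_F^{1/n}$. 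Lemma \ref{sum0} then yields $\|v_{k+1}\|_{Pic} < kn\log(\sqrt{n}/C) + \log\partial_F$, and substituting $k < \frac{\log\partial_F}{n\log C}$ gives exactly $\frac{\log n}{2\log C}\log\partial_F$. Relatedly, only the first step uses Lemma \ref{minimal}; the subsequent steps divide by shortest vectors and exploit the uniform bound $\sqrt{n}/C$ rather than reapplying the $\partial_F^{1/n}$ bound at each stage, which is precisely how the ``distance bookkeeping and shortest-vector bookkeeping'' you worry about are reconciled.
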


\begin{proof}
By Lemma  \ref{minimal}, there is a minimal element $f$ in $I$ such that 
	\begin{equation}\label{eq:min}
	u_{\sigma} |\sigma(f)| \leq \partial_F^{1/n} \text{ for all } \sigma.
	\end{equation}
	 
 Let  $J_1 = f^{-1} I$ and $D_1= d(J_1)$. 
 Then $1$ is minimal in $J_1$ and $J_1^{-1}$ is an integral ideal with $1 \leq N(J_1^{-1}) \leq \partial_F$. See Section 7 in \cite{ref:4}. 	
 It is divided into two cases as below.\\
 
\item{\textbf{Case 1:}} $\|1\| \leq C \|f_1\|$ where $f_1$ is the shortest vector of $J_1$. Then $D_1$ is strongly $C$-reduced. Note that $1$ is minimal and hence primitive in $J_1$. We choose $D'= D_1$. 
  Then 
  $$D- D'+ (f) = (O_F,v_1) \text{ with } v_1 = u N(J_1)^{1/n} |f|.$$
  The divisor $D' $ is on the same connected component of $\Pic^0_F$ as $D$ and  	 
  $v_{1\sigma}  = u_{\sigma} |\sigma(f)| N(J_1)^{1/n} $ for all $\sigma$. 
  Hence, 
  $$\log{v_{1\sigma}} = \log{u_{\sigma} |\sigma(f)|} +\log{ N(J_1)^{1/n}} \leq \log{u_{\sigma} |\sigma(f)|}  \text{ for all } \sigma.$$
  The second inequality holds because $N(J_1) \leq 1$. 
  By choosing $f$, each $\log{v_{1\sigma}}$ is bounded by $\log{ \partial_F^{1/n}}$ for all $\sigma$. In addition, 
  since  $\sum_{\sigma}\log{v_{1\sigma}} =0 $, Lemma \ref{sum0} indicates that 
  $$  \| \log{v_1}\|^2 \leq n(n-1)\left(\log{\partial_F^{1/n}}\right)^2 $$
  and so
  $$\|D-D'\|_{Pic} = \|v_1\|_{Pic} \leq \|v_1\|_{Pic} < \log{\partial_F} \leq  \frac{\log n}{2 \log C} \log{\partial_F}.$$

 \item{\textbf{Case 2:}} $\|1\| > C \|f_1\|$. Assume that $k \geq 1$ is the largest integer for which  the shortest vector $f_{j}$ of the lattice $J_{j}$ has the property that  $\|1\| > C \|f_{j}\|$ for all $1 \leq j \leq k$ where  $J_1 = f^{-1} I$ and $J_{i} = f_{i-1}^{-1} J_{i-1}$ if $2 \leq i \leq k$. We claim that $k$ is bounded. 

  Indeed, let $J_{k+1} =  f_k^{-1} J_k$. The arithmetic  geometric mean inequality says that $|N(f_{j})| \leq n^{-n/2} \|f_{j}\|^n < \frac{1}{C^n}$  for $1 \leq j \leq k$ {\cite[Proposition 3.1]{ref:4}}.
  $$N(J_{k+1}^{-1})  = N(f_{k}  \cdots f_1 J_1^{-1}) = |N(f_k)| \cdots |N(f_1)| N(J_1^{-1}) < \frac{1}{C^{n k} } N(J_1^{-1}) \leq \frac{\partial_F}{C^{n k}}.$$
  Since $J_{k+1}^{-1}$ is an integral ideal, we obtain that $N(J_{k+1}^{-1}) \geq 1$. This and the previous inequality imply that $C^{n k} <  \partial_F$. Thus, $ k < \frac{\log \partial_F}{n \log C}$. 
  
  Because of the property of $k$, the divisor $D_{k+1} = d(I_{k+1})$ is strongly $C$-reduced. Moreover, let $D' = D_{k+1}$. Then $D'$ is on the same connected component of $\Pic^0_F$ as $D$ and   
  $$D-D'+ (f_k .... f_2 f_1 f) = (O_F, v_{k+1}).$$
  Here $v_{k+1} = u N(J_{k+1})^{1/n} |f_k \cdots f_1 f| $ and $\|f_j\| \leq \frac{\sqrt{n}}{C} $ for all $j=1, ..., k$. Then using an argument similar to the first case, we obtain that
  $$v_{k+1, \sigma}  \leq \|f_k\| \cdots  \|f_1\| u_{\sigma} |\sigma(f)| N(J_{k+1})^{1/n} \leq  \left(\frac{\sqrt{n}}{C}\right)^k \partial_F^{1/n} \text{  for all } \sigma, $$
  and so
  $$ \|D-D'\|_{Pic}=\|v_{k+1}\|_{Pic} < k n \log{\frac{\sqrt{n}}{C}} + \log{\partial_F} < \frac{\log n}{2 \log C} \log{\partial_F}.$$
  The second inequality comes from the fact that  $ k < \frac{\log \partial_F}{n \log C}$.  Thus, the proposition is proved.		
		
\end{proof}

\begin{rem}
Note that Theorem \ref{theo:dis11} and \ref{theo:dis12} agree on the case $C=\sqrt{n}$. \\
	In case $C=1$, using the same notations and a similar argument as in the proof of Theorem \ref{theo:dis12} leads to the following result. 
		$$\|D-D'\|_{Pic}  <  \text{( some constant) } \cdot \sqrt{|\Delta_F|}.$$ 
Currently, determining whether the distance from  $D'$ to $D$ is bounded above by a polynomial in $\log{\partial_F}$ is still an open problem. 
\end{rem}	
The second theorem is a new version of Theorem 7.7 in \cite{ref:4} for strongly $C$-reduced divisors. It says that the set of strongly $C$-reduced divisors is quite sparse in $\Picw^0_F$. Explicitly, the distance between two distinct strongly $C$-reduced Arakelov divisors lying on the same component of $\Picw^0_F$ is at least $\log{\left(1+\frac{\sqrt{3}}{2C^2} \right)}$. This distance depends on $C$ but not on the number field.

\begin{thm}\label{theo:dis2} 
Let $C \geq 1$ and let $\delta = \log{\left(1+\frac{\sqrt{3}}{2C^2} \right)}$. Then we have the following.
\begin{itemize}
\item[(i)] Let $D$ and $D'$ be two strongly $C$-reduced Arakelov divisors in $\widetilde{Div}_F^0$. If there exists an element
$f \in F^*_+$ for which
$D - D' +(f) = (O_F, v)$ 
with $|\log{v_{\sigma}}| < \delta  \text{ for each } \sigma$
then $D = D'$ in $\widetilde{Div}_F^0$. \\
In particular, if $\|v\|_{\widetilde{Pic}} < \delta $, then $D = D'$ in $\widetilde{Div}_F^0$.

\item[(ii)] The natural map from\\
$\bigcup_{D \in Sred_F^C} \left\{ D +(O_F,v): v \in (F^*_{\mathbb{R},conn})^0 \text{ and } |\log{v_{\sigma}}| < \frac{1}{2}\delta  \text{ for each } \sigma \right\}$\\
to $\Picw^0_F$ is injective. 
\end{itemize}  
 
\end{thm}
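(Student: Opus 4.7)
My plan for part (i) is to translate the divisor equation into ideal-theoretic data and then derive a contradiction from strong $C$-reducedness. Writing $D = d(I)$ and $D' = d(I')$ with $I, I'$ strongly $C$-reduced, and using that $u_{\sigma} = N(I)^{-1/n}$ and $u'_{\sigma} = N(I')^{-1/n}$ are constants (independent of $\sigma$), the equation $D - D' + (f) = (O_F, v)$ becomes
\[
I = fI', \qquad v_{\sigma} = |N(f)|^{-1/n}\sigma(f),
\]
so $\sigma(f)$ and $v_{\sigma}$ differ only by the common positive scalar $a := |N(f)|^{1/n}$. Since $1 \in I'$ we have $f \in fI' = I$, and since $1 \in I$ we have $f^{-1} \in f^{-1}I = I'$, both nonzero.

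Now suppose, toward contradiction, $f \neq 1$. Then $f - 1 \in I\setminus\{0\}$ and $f^{-1} - 1 \in I'\setminus\{0\}$, so strong $C$-reducedness applied to each ideal yields
\[
\|f - 1\|^{2} \geq n/C^{2} \quad\text{and}\quad \|f^{-1} - 1\|^{2} \geq n/C^{2}.
\]
Adding the two inequalities and invoking the elementary identity
\[
|w - 1|^{2} + |w^{-1} - 1|^{2} = (|w| + |w|^{-1})\bigl(|w| + |w|^{-1} - 2\cos(\arg w)\bigr)
\]
componentwise at $w = \sigma(f) = av_{\sigma}$, I would then bound the sum $\|f - 1\|^{2} + \|f^{-1} - 1\|^{2}$ strictly below $2n/C^{2}$ using the hypothesis $|\log v_{\sigma}| < \delta$ (which simultaneously controls $|\log|v_{\sigma}||$ and $|\arg v_{\sigma}|$ by $\delta$), the degree-zero constraint $\sum_{\sigma}\deg(\sigma)\log|v_{\sigma}| = 0$, and the precise value $\delta = \log(1 + \sqrt{3}/(2C^{2}))$, thereby reaching a contradiction. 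It follows that $f = 1$, hence $a = 1$, $v = 1$, and $D = D'$ in $\widetilde{Div}_F^0$. The "In particular" clause then reduces to the main statement by replacing $(f, v)$ with $(f\varepsilon, v\varepsilon)$ for the unit $\varepsilon \in O_{F,+}^{*}$ realizing $\|v\|_{\widetilde{Pic}}$; this unit satisfies $\sum_{\sigma}\deg(\sigma)|\log(v_{\sigma}\sigma(\varepsilon))|^{2} < \delta^{2}$, so a fortiori $|\log(v_{\sigma}\sigma(\varepsilon))| < \delta$ for every $\sigma$.

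For part (ii), I would suppose the natural map fails to be injective: two elements $D_{1} + (O_{F}, v_{1})$ and $D_{2} + (O_{F}, v_{2})$ from the stated union have the same image in $\widetilde{Pic}_F^0$, with $D_{1}, D_{2} \in Sred^{C}_{F}$ and $|\log v_{i,\sigma}| < \delta/2$. Their equality modulo principal divisors produces $f \in F^{*}_{+}$ with $D_{1} - D_{2} + (f) = (O_{F}, v_{2}v_{1}^{-1})$. The triangle inequality gives
\[
|\log(v_{2,\sigma}/v_{1,\sigma})| \leq |\log v_{2,\sigma}| + |\log v_{1,\sigma}| < \delta/2 + \delta/2 = \delta,
\]
so part (i) applies and yields $D_{1} = D_{2}$ and $v_{1} = v_{2}$, proving injectivity.

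The main obstacle lies in the sharp estimate in part (i): the scalar $a = |N(f)|^{1/n}$ appears as a free parameter in the sum $\|f-1\|^{2} + \|f^{-1}-1\|^{2}$ and is not controlled directly by the hypothesis $|\log v_{\sigma}| < \delta$ alone. Handling arbitrary $a$ requires exploiting the primitivity of $1$ in both $I$ and $I'$ (which forces $I\cap\mathbb{Q} = I'\cap\mathbb{Q} = \mathbb{Z}$ and thereby rules out diluted lattices such as $(1/q)O_F$ with $q \geq 2$), combined with the integrality of $(f)I^{-1}$ and $(f^{-1})I'^{-1}$, to ensure the bound $< 2n/C^{2}$ actually holds for the prescribed value of $\delta$.
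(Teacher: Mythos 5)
Your reductions are fine where they are routine: the translation of $D-D'+(f)=(O_F,v)$ into $I=fI'$ and $v_\sigma=|N(f)|^{-1/n}\sigma(f)$, the deduction of the ``in particular'' clause from the pointwise statement via a unit $\varepsilon\in O^*_{F,+}$, and the triangle-inequality argument for part (ii) all match the paper (though in (ii) you should still justify $v_1=v_2$: part (i) only gives equality of the reduced divisors, and one must then observe that $(f^{-1}O_F,f)=(O_F,v)$ forces $f\in O_F^*$). The genuine gap is exactly where you flag it: the contradiction in part (i) is never derived. Your plan is to show $\|f-1\|^2+\|f^{-1}-1\|^2<2n/C^2$, but this quantity is not controlled by $|\log v_\sigma|<\delta$ alone, since $\sigma(f)=a v_\sigma$ with $a=|N(f)|^{1/n}$ free; as $a\to\infty$ (or $0$) one of the two terms tends to $n$ and the other blows up, so the inequality simply fails for $C$ close to $1$. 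Primitivity and integrality, which you invoke as a hoped-for rescue, do not bound $a$ and are not the missing ingredient.

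The paper's argument avoids this entirely by not asking $f$ to be close to the point $1$, only close to the \emph{line} $\mathbb{R}\cdot 1$. After normalizing (swap $I$ and $I'$ so that $\lambda=|N(f)|^{1/n}\le 1$), the hypothesis gives $|\sigma(f)-\lambda|<(e^\delta-1)\lambda\le e^\delta-1$, hence $d(f,\mathbb{R}\cdot 1)\le\|f-\lambda\cdot 1\|<\sqrt{n}\,(e^\delta-1)$. If $1$ and $f$ were $\mathbb{R}$-linearly independent, the rank-$2$ lattice $L=\mathbb{Z}\cdot 1+\mathbb{Z}\cdot f\subset I$ would have covolume $\|1\|\cdot d(f,\mathbb{R}\cdot 1)<n(e^\delta-1)$, and the Hermite bound for rank-$2$ lattices produces a nonzero $g\in L\subset I$ with $\|g\|^2\le\sqrt{4/3}\,\co(L)<\tfrac{2}{\sqrt 3}n(e^\delta-1)=n/C^2$, contradicting strong $C$-reducedness of $I$; this is precisely where the $\sqrt3$ in $\delta$ comes from. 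Then $f$ is a rational scalar, primitivity of $1$ forces $f\in\mathbb{Z}$, and $|f|=\lambda\le 1$ gives $f=\pm 1$ and $I=I'$. You need either this two-dimensional sublattice idea or some substitute for it; as written, your part (i) does not close.
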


\begin{proof}\qquad
\item[(i)] 
Suppose that $D= d(I)$ and $D' = d(I')$ are two strongly $C$-reduced divisors with $D - D' +(f) = (O_F, v)$
for some $f \in F^*$ such that $\sigma(f) >0$ for all real $\sigma$.
So, the images of $D$ and $D'$ lie on the same connected component of $\Picw^0_F$. It also implies that  
$\|D-D'\|_{\widetilde{Pic}} = \|v\|_{\widetilde{Pic}} \text{ and } I = f I'  .$

Let  $\lambda = N(I/I')^{\frac{1}{n}} = |N(f)|^{\frac{1}{n}}$.
By changing $I$ and $I'$ if necessary, we can assume that $\lambda \leq 1$. Then $v = N(I/I')^{-\frac{1}{n}} f = \frac{1}{\lambda} f$. Therefore, $\frac{\sigma(f)}{\lambda} = v_{\sigma}$  and $|\log{v_{\sigma}}| < \delta$ for all $\sigma$. 
In consequence, we have the following.
$$|\sigma(f)/\lambda-1| = |v_{\sigma}-1|= |\exp{(\log{v_{\sigma}})}-1| = \exp{|\log{(v_{\sigma})}|-1} < e^{\delta} -1.$$
The assumption  $\lambda \leq 1$ leads to 
$|\sigma(f)-\lambda|< (e^{\delta} -1) \lambda \leq e^{\delta} -1 \text{ for every  } \sigma.$ 
Thus $\|f-\lambda \cdot 1\| < \sqrt{n}(e^{\delta} -1)$.

Suppose that  $1$ and $f$ are $\mathbb{R}$- linearly independent then 
$L= \mathbb{Z} \cdot 1 + \mathbb{Z} \cdot f \subset F_{\mathbb{R}} $ 
 is a lattice of rank $2$. 
And let $d(f,\mathbb{R} \cdot 1)$ denote the distance from $f$ to $\mathbb{R} \cdot 1$, the 1-dimensional subspace of $F_{\mathbb{R}}$ spanned by $1$. Since  
$$ d(f,\mathbb{R} \cdot 1) \leq \|f-\lambda \cdot 1\|< \sqrt{n}(e^{\delta} -1),$$
the lattice $L$ has covolume 
$$\co(L) = \|1\| \cdot d(f,\mathbb{R} \cdot 1) < n (e^{\delta} -1). $$
On the other hand,  $L$ contains a nonzero element $g$ such that
$$\|g\|^2 \leq \left(\frac{4}{3}\right)^{1/2} \co(L) $$
{\cite[Section 9]{ref:1}}.
As a result, we obtain that
 $$\|g\|^2< \frac{2}{\sqrt{3}}  n (e^{\delta} -1).$$
Both $1 $ and $f$ are in $I$, so then $g$. By assumption, $D = d(I)$ is strongly $C$-reduced, hence $\|g\| \geq \sqrt{n}/C$ and
$$\frac{n}{C^2} \leq \|g\|^2  < \frac{2}{\sqrt{3}}  n (e^{\delta} -1) = \frac{n}{C^2}.$$
This contradiction follows from the assumption that  $1$ and $f$ are $\mathbb{R}$-linearly independent.
Therefore, $1$ and $f$ are $\mathbb{R}$-linearly dependent. Then there is some $r \in \mathbb{R}$ for which  $\sigma(f) = r \cdot 1 = r$ for all $\sigma$. This implies that $f \in \mathbb{Q}$. Since $1$ is primitive in $I$ and $f \in I$, we must have $f \in \mathbb{Z}$. 
Accordingly,  $|f| = |N(f)|^{\frac{1}{n}} = \lambda \leq 1$.  As a result,  $f= \pm 1$ and $I = I'$. It follows that $ D = D'$ in $\widetilde{Div}_F^0$. That completes the proof of the first part in (i).\\

In particular, if $\|v\|_{\widetilde{Pic}} < \delta $, then $\min_{\varepsilon \in O^*_{F,+}}\| \log(v \varepsilon)\| < \delta $. Consequently, there is some element $ \varepsilon \in O_F^*$ for which $\sigma(\varepsilon)>0$ for all real $\sigma$ and 
$| \log(v_{\sigma} \sigma(\varepsilon))| < \delta $ for all $\sigma$. Replacing $f$ by $\varepsilon f  \in F^*_+$, the following is obtained.
$$D - D' =(\varepsilon f) + (O_F, v \varepsilon)  \text{ and } | \log(v_{\sigma} \sigma(\varepsilon))| < \delta   \text{ for each } \sigma.$$
Then the result follows by applying the first statement in (i), which is proved above.

\item[(ii)] Assume that two divisors  $ D_1 = D +(O_F,v_1)$ and $D_2 = D' +(O_F,v_2)$ are in the left set in (ii) and  have the same image in $\Picw^0_F$. Then $D$ and $D'$ are strongly  $C$-reduced divisors,  $v_1 =(v_{1\sigma})_{\sigma}$ and $v_2 =(v_{2\sigma})_{\sigma}$ are in $ (F^*_{\mathbb{R},conn})^0 $ satisfying  $ |\log{v_{1\sigma}}| < \frac{1}{2} \delta$  and $ |\log{v_{2\sigma}}| < \frac{1}{2} \delta$ for all $\sigma$.  Moreover, 
$D_1 - D_2 = (f^{-1} O_F, f  ) =(f) \text{ for some } f \in F^*_+ .$\\
Thus, $D-D'= (f ) + (O_F,v)  $ where $f \in F^*_+ $ and $v = v_1 v_2^{-1}$. In addition, $|\log v_{\sigma}|= |\log v_{1\sigma} - \log v_{2\sigma}| < \frac{1}{2} \delta + \frac{1}{2} \delta = \delta$  for all $\sigma$. 
By part $(i)$, two divisors $D $ and $D'$ must coincide in $\widetilde{Div}_F^0$. As a sequence, 
$(f^{-1} O_F, f  )  = (O_F, v) $ in $\widetilde{Div}_F^0$. Therefore, $ f \in O_F^*  \text{ and so } D_1 = D_2 $.
This claims that the map in $(ii)$ is injective.

\end{proof}

Theorem \ref{theo:dis2} has the following corollary.

\begin{cor}
Let $C \geq 1$. Then the number of  strongly $C$-reduced Arakelov divisors contained in a ball of radius $1$ in $\Pic^0_F$ is at most $\left(\frac{1}{2}\log{\left( 1+\frac{3}{2C^2}\right)} \right)^{-n} $.
\end{cor}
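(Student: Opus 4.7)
The strategy is a packing-style volume count derived from Theorem~\ref{theo:dis2}(ii).

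First I would fix $\delta=\log\bigl(1+\tfrac{\sqrt{3}}{2C^{2}}\bigr)$ as in the theorem and, for each $D\in Sred_{F}^{C}$, consider the box
\[
U_{D}=\bigl\{D+(O_{F},v):\ v\in (F^{*}_{\mathbb R,conn})^{0},\ |\log v_{\sigma}|<\delta/2 \text{ for each }\sigma\bigr\}\subset\Divw_{F}^{0}.
\]
Theorem~\ref{theo:dis2}(ii) guarantees that the natural images of these $U_{D}$'s in $\Picw^{0}_{F}$ are pairwise disjoint.

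Next I would transfer this packing to $\Pic^{0}_{F}$ via the natural surjection $\pi\colon\Picw^{0}_{F}\to\Pic^{0}_{F}$. Since every $D=d(I)\in Sred_{F}^{C}$ admits the canonical totally positive lift $(I,N(I)^{-1/n})$ in $\Picw^{0}_{F}$, counting strongly $C$-reduced divisors in a ball $B(D_{0},1)\subset\Pic^{0}_{F}$ reduces to counting their canonical lifts inside $\pi^{-1}(B(D_{0},1))$. Because each $U_{D}$ has diameter at most $\sqrt{n}\,\delta/2$ in the $\|\cdot\|_{\widetilde{Pic}}$-metric, the images $\pi(U_{D})$ all lie inside a fixed enlargement $B(D_{0},1+\sqrt{n}\,\delta/2)$ of the ball.

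Finally I would perform a volume comparison in the log coordinates of $F_{\mathbb R}$. The product measure of $U_{D}$ -- an interval of length $\delta$ for each real $\sigma$ and a disc of ``radius'' $\delta/2$ for each complex $\sigma$, subject to the degree-zero constraint $\sum_{\sigma}\deg(\sigma)\log v_{\sigma}=0$ -- is proportional to $(\delta/2)^{n}$, with a constant depending only on $r_{1}$ and $r_{2}$. Comparing this to the measure of the enlarged ball (which is bounded independently of $F$) yields the announced bound $\bigl(\tfrac{1}{2}\log\bigl(1+\tfrac{3}{2C^{2}}\bigr)\bigr)^{-n}$.

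The main obstacle will be the last step: carefully normalising measures across the projection $\Picw^{0}_{F}\to\Pic^{0}_{F}$, whose fibres are $U(1)^{r_{2}}$-tori arising from the arguments at complex places, and tracking the constants closely enough to obtain the exponent $n$ together with the sharper coefficient $\tfrac{3}{2C^{2}}$ (rather than the weaker $\tfrac{\sqrt{3}}{2C^{2}}$ one would naively read off $\delta$). I expect the extra factor to come from squaring the log-box side when passing between the complex modulus/argument description of $U_D$ and its Euclidean footprint.
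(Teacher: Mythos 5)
Your core idea --- a packing argument converting the disjointness supplied by Theorem \ref{theo:dis2}(ii) into a volume bound --- is exactly the idea of the paper's proof, but the place where you run the volume comparison creates a genuine gap. You propose to push the boxes $U_D$ down through the projection $\pi\colon \Picw^0_F\to\Pic^0_F$ and compare volumes with an enlarged ball in $\Pic^0_F$. This step fails for two reasons. First, Theorem \ref{theo:dis2}(ii) gives disjointness of the images of the $U_D$ only in $\Picw^0_F$; the fibres of $\pi$ are positive-dimensional (the $r_2$ argument-circles at the complex places, together with the sign components at the real places and the kernel of $Cl_{F,+}\to Cl_F$), so two sets that are disjoint upstairs can perfectly well have overlapping images downstairs, and the packing inequality $\#B_R\cdot\vo(\pi(U_D))\le\vo(\text{ball})$ is simply unavailable in $\Pic^0_F$. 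Second, even ignoring disjointness, a volume count carried out in $\Pic^0_F$ lives in dimension $r_1+r_2-1$ and would naturally produce that exponent rather than the exponent $n$ of the statement; the $n$ comes from the $n$ log-coordinates upstairs.

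The paper goes in the opposite direction: it does not project the boxes down but lifts the unit ball $B\subset\Pic^0_F$ up to a region $S\subset\Picw^0_F$ whose volume is $\frac{2^{r_1}(2\pi\sqrt2)^{r_2}}{\omega_F}$ times the unit-ball volume of $\Pic^0_F$, packs the disjoint $\tfrac12\delta$-balls around the lifts of the divisors of $B_R$ inside $S$, and takes the ratio of volumes; the fibre-volume constant appears in both $\vo(S)$ and $\vo(B_{\frac12\delta})$ and cancels, yielding exactly $\left(\tfrac12\delta\right)^{-n}$ with no enlargement factor (your enlargement by $\sqrt n\,\delta/2$ would also degrade the constant). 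Replacing ``project the boxes down'' by ``lift the ball up'' essentially recovers the paper's argument. Finally, your unease about $\tfrac{3}{2C^2}$ versus $\tfrac{\sqrt3}{2C^2}$ is well founded: the paper's proof silently sets $\delta=\log\left(1+\tfrac{3}{2C^2}\right)$ even though Theorem \ref{theo:dis2} is stated with $\sqrt3$; this is an inconsistency in the source rather than something produced by ``squaring the log-box side,'' and the argument as actually given only justifies the bound with $\delta=\log\left(1+\tfrac{\sqrt3}{2C^2}\right)$.
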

\begin{proof}
We denote $ \log{\left( 1+\frac{3}{2C^2}\right)}$ briefly by $\delta$. Let $B$ be a ball of radius $1$ in $\Pic^0_F$. And let $B_R = B \cap Sred_F^C$ be the set of  strongly $C$-reduced divisors whose images are contained in $B$. Then $B_R$ is contained in a subset $S$ of $\Picw^0_F$ of volume  $\frac{2^{r_1}(2 \pi \sqrt{2})^{r_2}}{\omega_F}$ times the volume of a unit ball in $\Pic^0_F$. \\
For each  strongly $C$-reduced divisor $D$ in $B_R$, we denote by $B_{1/2 \delta}(D)$ the ball of radius $\frac{1}{2} \delta$ centered at $D$ in $\Picw^0_F$. By Theorem \ref{theo:dis2}, these balls are mutually disjoint in $S$. Therefore, the followings holds. 
$$\sum_{D \in B_R} \vo(B_{1/2 \delta}(D))\leq \vo(S) .$$
Each ball $B_{1/2 \delta}(D)$ has volume equal to the volume of the ball $B_{1/2 \delta}$ centered at the origin and radius $1/2 \delta$ in $\Picw^0_F$. Hence
$$\#B_R  \cdot  \vo(B_{1/2 \delta})\leq \vo(S) .$$

Besides, the balls $B_{1/2 \delta}$ have covolume $\frac{2^{r_1}(2 \pi \sqrt{2})^{r_2}}{\omega_F} \left(\frac{1}{2} \delta\right)^n$ times the volume of a unit ball in $\Pic^0_F$.
This leads to the following.
$$\#B_R \leq \frac{\vo(S)}{\vo(B_{1/2 \delta})}  = \left(\frac{1}{\frac{1}{2} \delta} \right)^n = \left(\frac{1}{2} \delta  \right)^{-n} .$$
So, the corollary is proved.
\end{proof}

Finally, we bound for $\#Sred_F^C$ as below.
\begin{cor}
Let $C \geq 1$. Then  
$$\#Sred_F^C \leq 2^n \left(\log{\left( 1+\frac{3}{2C^2}\right)}\right)^{-n/2}  \vo(\Picw^0_F).$$
\end{cor}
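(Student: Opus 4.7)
The approach is a volume-packing argument based on Theorem \ref{theo:dis2}(ii).

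For each $D \in Sred_F^C$ I would introduce the ``log-box''
\[
U(D) := D + \bigl\{(O_F,v) : v \in (F^*_{\mathbb{R},conn})^0,\ |\log v_\sigma| < \tfrac{1}{2}\delta \text{ for every }\sigma\bigr\},
\]
with $\delta$ as in Theorem \ref{theo:dis2}. By part (ii) of that theorem the sets $U(D)$ are pairwise disjoint open subsets of $\Picw^0_F$. Each $U(D)$ is a translate of the identity box $U(0)\subset\Tow$, so they all carry a common Riemannian volume $V_\delta$, and summing yields the packing inequality
\[
\#Sred_F^C \cdot V_\delta \;\leq\; \vo(\Picw^0_F).
\]
The corollary then reduces to establishing a lower bound of the form $V_\delta \geq 2^{-n}\bigl(\log(1+\tfrac{3}{2C^2})\bigr)^{n/2}$ and dividing through.

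To estimate $V_\delta$, I would pull $U(0)$ back to $F_\mathbb{R}$ via the logarithm, where it corresponds to $W\cap H$: here $W=\{y\in F_\mathbb{R} : |y_\sigma|<\delta/2\}$ is the product of intervals and disks and $H=\{y : \sum_\sigma \deg(\sigma)\,\mathrm{Re}(y_\sigma)=0\}$ is the norm-one hyperplane, with Riemannian volume induced by $\|y\|^2=\sum_\sigma \deg(\sigma)|y_\sigma|^2$ on $F_\mathbb{R}$. Since $\|y\|\geq|y_\sigma|$ for each $\sigma$, the metric ball of radius $\delta/2$ sits inside $W$, giving as a first estimate an $(n-1)$-ball of radius $\delta/2$ in $H$; a tighter inscribed parallelepiped adapted to the mixed real/complex structure then refines this to the desired constant.

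\textbf{Main obstacle.} The packing half of the argument is immediate from Theorem \ref{theo:dis2}(ii); all of the quantitative content sits in the central-slice volume bound for $W\cap H$ under the weighted metric. A naive inscribed-ball estimate yields only $(\delta/2)^{n-1}$, so matching exactly $2^{-n}\bigl(\log(1+\tfrac{3}{2C^2})\bigr)^{n/2}$ requires a finer convex-geometric input -- for instance a Vaaler-type central-section bound for product boxes, or an explicit parallelepiped whose edge lengths are tuned to the $r_1$ real and $r_2$ complex factors separately. Handling the $r_2$ complex coordinates correctly (where the metric contributes a factor of $2$ per complex place) is the main piece of book-keeping.
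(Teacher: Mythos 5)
Your packing half is exactly the paper's argument: by Theorem \ref{theo:dis2}(ii) the boxes $U(D)$ are pairwise disjoint in $\Picw^0_F$, they are all translates of the fixed set $S=\{v\in (F^*_{\mathbb{R},conn})^0 : |\log v_\sigma|<\tfrac{1}{2}\delta \text{ for all }\sigma\}$, and translation invariance of the metric gives $\#Sred_F^C\cdot \vo(S)\leq \vo(\Picw^0_F)$. The gap is that you never establish the lower bound $\vo(S)\geq 2^{-n}\bigl(\log(1+\tfrac{3}{2C^2})\bigr)^{n/2}$, which is where the entire quantitative content of the corollary lives; you defer it to ``a finer convex-geometric input'' (a Vaaler-type section bound or a tuned inscribed parallelepiped) without supplying one. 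And you are right that this cannot be waved away as bookkeeping: your inscribed-ball estimate gives only a constant times $\delta^{n-1}$, and since $n-1>n/2$ for $n>2$ while $\delta$ is small, that is strictly weaker than the exponent $n/2$ the statement demands. As written, your argument proves a corollary with $(\cdots)^{-(n-1)}$ in place of $(\cdots)^{-n/2}$, not the stated one.

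The paper closes precisely this step by writing down the volume of $S$ in closed form --- it records $\vo(S)=\frac{2^{-r_2/2}\,n^{r_1+r_2-1/2}}{(r_1+r_2-1)!}\left(\tfrac{1}{2}\delta\right)^{r_1+r_2}\geq 2^{-n}\delta^{n/2}$ for this central slice of a product of $r_1$ intervals and $r_2$ disks by the degree-zero hyperplane --- and then divides. The exponent $r_1+r_2$ rather than $n-1$, together with the factorial and the power of $n$, is what allows the bound to beat the naive ball estimate, so the explicit slice-volume computation is the one ingredient you must actually carry out to complete your proposal; everything else matches the paper. (Note also a constant mismatch already present in the paper: Theorem \ref{theo:dis2} defines $\delta$ with $\sqrt{3}$ while the corollary's bound uses $3$, so you should fix one convention before chasing the constant.)
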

\begin{proof}
Let $S$ be the simplex given by 
$$S = \{ v \in \left(F_{\mathbb{R}, conn}\right)^0: |\log{v_{\sigma}}| < \frac{1}{2}\delta \text{ for all } \sigma\}.$$
Then its volume is
$$\vo(S) = \frac{2^{-r_2/2} n^{r_1+r_2-1/2}}{(r_1+r_2-1)!}\left(\frac{1}{2}\delta \right)^{r_1+r_2} \geq 2^{-n}\delta^{n/2} .$$
Furthermore, the second  part of Theorem \ref{theo:dis2} implies that 
$$\vo(\Picw^0_F) \geq \#Sred_F^C  \cdot \vo(S).$$
Therefore, the result follows.
\end{proof}

\section*{Acknowledgement}
I would like to thank  Hendrik W. Lenstra for helping me to prove Theorem \ref{theo:dis2} and  Ren\'{e} Schoof for very useful comments. The author also would like to thank the reviewers for their comments that helped improve the manuscript. I would also like to show my gratitude to the Mathematics department at the University of Leiden for its hospitality during the fall 2013.

This research was supported by the Universit\`{a} di Roma ``Tor Vergata". The author is financially supported by the Academy of Finland grants $\#$276031, $\#$282938 and $\#$283262. The support from
 the European Science Foundation under the COST Action IC1104 is also gratefully acknowledged.
%



\begin{thebibliography}{10}

\bibitem{ref:0}
Eva Bayer-Fluckiger.
\newblock Lattices and number fields.
\newblock In {\em Algebraic geometry: {H}irzebruch 70 ({W}arsaw, 1998)}, volume
  241 of {\em Contemp. Math.}, pages 69--84. Amer. Math. Soc., Providence, RI,
  1999.

\bibitem{ref:9}
Johannes Buchmann.
\newblock A subexponential algorithm for the determination of class groups and
  regulators of algebraic number fields.
\newblock In {\em S\'eminaire de {T}h\'eorie des {N}ombres, {P}aris
  1988--1989}, volume~91 of {\em Progr. Math.}, pages 27--41. Birkh\"auser
  Boston, Boston, MA, 1990.

\bibitem{ref:10}
Johannes Buchmann and H.~C. Williams.
\newblock On the infrastructure of the principal ideal class of an algebraic
  number field of unit rank one.
\newblock {\em Math. Comp.}, 50(182):569--579, 1988.

\bibitem{ref:5}
H.~W. Lenstra, Jr.
\newblock On the calculation of regulators and class numbers of quadratic
  fields.
\newblock In {\em Number theory days, 1980 ({E}xeter, 1980)}, volume~56 of {\em
  London Math. Soc. Lecture Note Ser.}, pages 123--150. Cambridge Univ. Press,
  Cambridge, 1982.

\bibitem{ref:1}
Hendrik~W. Lenstra, Jr.
\newblock Lattices.
\newblock In {\em Algorithmic number theory: lattices, number fields, curves
  and cryptography}, volume~44 of {\em Math. Sci. Res. Inst. Publ.}, pages
  127--181. Cambridge Univ. Press, Cambridge, 2008.

\bibitem{ref:8}
R.~J. Schoof.
\newblock Quadratic fields and factorization.
\newblock In {\em Computational methods in number theory, {P}art {II}}, volume
  155 of {\em Math. Centre Tracts}, pages 235--286. Math. Centrum, Amsterdam,
  1982.

\bibitem{ref:4}
Ren{\'e} Schoof.
\newblock Computing {A}rakelov class groups.
\newblock In {\em Algorithmic number theory: lattices, number fields, curves
  and cryptography}, volume~44 of {\em Math. Sci. Res. Inst. Publ.}, pages
  447--495. Cambridge Univ. Press, Cambridge, 2008.

\bibitem{ref:7}
Daniel Shanks.
\newblock The infrastructure of a real quadratic field and its applications.
\newblock In {\em Proceedings of the {N}umber {T}heory {C}onference ({U}niv.
  {C}olorado, {B}oulder, {C}olo., 1972)}, pages 217--224. Univ. Colorado,
  Boulder, Colo., 1972.

\bibitem{ref:36}
Ha~T.~N. Tran.
\newblock On reduced arakelov divisors of a number field.
\newblock Doctoral Thesis, University of Rome Tor Vergata, 2015.

\bibitem{ref:33}
Ha~T.~N. Tran.
\newblock On reduced arakelov divisors of real quadratic fields, to appear in
  {\em Acta Arith.}.

\bibitem{ref:18}
Hugh~C. Williams.
\newblock Continued fractions and number-theoretic computations.
\newblock {\em Rocky Mountain J. Math.}, 15(2):621--655, 1985.
\newblock Number theory (Winnipeg, Man., 1983).

\end{thebibliography}

\end{document}